 \newtheorem{thm}{Theorem}[section]
 \newtheorem{cor}[thm]{Corollary}
 \newtheorem{lem}[thm]{Lemma}
 \newtheorem{prop}[thm]{Proposition}
 \theoremstyle{definition}
 \newtheorem{defn}[thm]{Definition}
 \theoremstyle{remark}
 \newtheorem{rem}[thm]{Remark}
 \numberwithin{equation}{section}
\newcommand{\half}{\frac{1}{2}}
\newcommand{\ene}{\mathbb{N}}
\newcommand{\ar}{\mathbb{R}}
\newcommand{\ce}{\mathbb{C}}
\newcommand{\arn}{{\mathbb{R}}^n}
\newcommand{\Rn}{{\mathbb{R}}^n}
\newcommand{\ardn}{{\mathbb{R}}^{n}\times{\mathbb{R}}^{n}}
\newcommand{\bi}{\begin{itemize}}
\newcommand{\ei}{\end{itemize}}
\newcommand{\be}{\begin{enumerate}}
\newcommand{\ee}{\end{enumerate}}
\newcommand{\beq}{\begin{equation}}
\newcommand{\eq}{\end{equation}}
\newcommand{\Lh}{\mathcal{L}}
\newcommand{\jp}{\langle\xi\rangle}
\DeclareMathOperator{\Tr}{Tr}
\DeclareMathOperator{\os}{o}
\DeclareMathOperator{\obi}{O}
\newcommand{\Lie}{Lie}
\begin{document}

\title[ON A CLASS OF ANHARMONIC OSCILLATORS]{ON A CLASS OF ANHARMONIC OSCILLATORS }

\author[M. Chatzakou]{Marianna Chatzakou}

\address{Marianna Chatzakou: 
  \endgraf
  Department of Mathematics\\
Imperial College London\\
180 Queen's Gate, London SW7 2AZ\\
United Kingdom}

\email{m.chatzakou16@imperial.ac.uk}

\author[J. Delgado]{Julio Delgado}

\address{Julio Delgado: 
  \endgraf
 Universidad del Valle \\
Departamento de Matem\'aticas\\
Colombia}

\email{delgado.julio@correounivalle.edu.co}


\author[M. Ruzhansky]{Michael Ruzhansky}
\address{
  Michael Ruzhansky:
  \endgraf
  Department of Mathematics: Analysis, Logic and Discrete Mathematics
  \\
  Ghent University, Belgium
  \endgraf
  and
  \endgraf
  School of Mathematical Sciences \\
    Queen Mary University of London
  \\
  United Kingdom}
  \email{michael.ruzhansky@ugent.be}
  
\date{\today}
\begin{abstract}
 In this work we study a class of anharmonic oscillators within the framework of the Weyl-H\"ormander calculus. 
A prototype is an operator on $\arn$ of the form $(-\Delta)^{\ell}+|x|^{2k}$  for $k,\ell$  integers $\geq 1$. 
 We obtain spectral properties in terms of Schatten-von Neumann classes for their negative powers and derive from them estimates on the rate of growth for the eigenvalues of the anharmonic oscillator $(-\Delta)^{\ell}+|x|^{2k}$. In particular we give a simple proof for the main term of the spectral asymptotics of these operators.  We also study some examples of anharmonic oscillators arising from the analysis on Lie groups.\\
\end{abstract}
\subjclass[2010]{Primary 35L80, 47G30 ; Secondary 35L40, 35A27.}

\keywords{Anharmonic oscillators, Schr\"odinger equation, energy levels, nonhomogeneous calculus, microlocal analysis, growth of eigenvalues  }

\maketitle
\tableofcontents
\section{Introduction}
In this paper we show some spectral properties for anharmonic oscillators on $\arn$. 
In the study of the Schr\"odinger equation $$i\partial_t\psi=-\Delta \psi +V(x)\psi$$ the analysis of energy levels $E_j$ is reduced to the corresponding  eigenvalue problem for the operator    $-\Delta  +V(x)$. Spectral properties for the quartic oscillator $V(x)=x^4$ on $\ar$,  have been studied by  Voros in \cite{vor:anosc} with methods also applicable to more general anharmonic oscillators $A_{2k}= -\frac{d^{2}}{dx^{2}}+x^{2k} $ on $\ar$. In this work, Voros used the zeta function $\zeta(s)$ of the anharmonic oscillator   $A_4=C(-\frac{d^{2}}{dx^{2}}+x^{4}),$ where $C$ is a suitable normalisation constant related to the Gamma function.  The zeta function of an operator $A$ is defined by  $\zeta(s)=\sum\limits_{j=0}^{\infty} \lambda_j^{-s}$, where $\lambda_j$ are the eigenvalues of $A$ arranged in the increasing order. It is important to point out that the zeta function $\zeta(s)$ of the harmonic oscillator  $A_2$ is related to the Riemann zeta function $\zeta_{R}(s)$ by
\[\zeta(s)=(1-2^{-s})\zeta_{R}(s).\]
The investigation of anharmonic oscillators is therefore not only  relevant in analysis and mathematical physics, but also in the number theory. We will also see how they are closely related to some Lie groups and provide some applications to the spectral theory of some differential operators. A special case will be the Heisenberg, Engel and Cartan groups which supply a family of interesting examples. 

Despite the intensive research on the quartic oscillator in the last 40 years, the exact solution for the corresponding eigenvalue problem is unknown (cf. \cite{osush:anh}). This fact is a further motivation for the research in approximative and qualitative methods around this problem. 

The more general anharmonic oscillators appeared in the literature in the form of quartic oscillators with potentials
$\lambda x^4+ \frac{x^2}{2}$ in the works of F. T. Hioe and E. W. Montroll (cf. \cite{hm:anh, hm:anh2}). There, the authors developed numerical calculations for the study of the corresponding energy levels. Subsequently, R. Balian, G. Parisi and A. Voros (cf. \cite{balpv:anh}) started the research on quartic oscillators from the point of view of Feynman path integrals.  In more recent works on anharmonic oscillators, S. Albeverio and S. Mazzucchi \cite{alb:anh} considered quartic Hamiltonians with time-dependent coefficients. The case of a fractional Laplacian and a quartic potential has been considered by S. Durugo and  J. L\"orinczi in \cite{dulo:anh}.  

A more general class of anharmonic oscillators  arises in the form $$-\frac{d^{2\ell}}{dx^{2\ell}}+x^{2k}+p(x),$$ where $p(x)$ is a polynomial of order $2k-1$ on $\ar$ and with $k, \ell$ integers $\geq 1$. The spectral asymptotics of such operators  have been analysed by   B. Helffer and D. Robert \cite{hr:sap3, hr:anosc2, hr:anosc}. In this paper we study the general case on $\arn$ where a prototype operator is of the form
 \beq (-\Delta)^{\ell}+|x|^{2k}\label{gahw1},\eq
where $k,\ell$ are integers $\geq 1$.  Spectral properties for this type of operators in the case $k=\ell$ have been considered in \cite{Helffer:b1} and  \cite{Robert:book1}. 

The general setting we consider here is as follows. Let $k$ be an integer $\geq 1$, and let $\mathcal{P}_{2k}$ be the set of real-valued polynomials on $\arn$, such that 
\[\liminf_{|x|\rightarrow\infty}\frac{p(x)}{|x|^{2k}}>0.\]
We consider operators of the form
\beq\label{qpop5} T=q(D)+p(x),\eq
where $q\in \mathcal{P}_{2\ell}, p\in\mathcal{P}_{2k}$, and $k,\ell$ are integers $\geq 1$. We note that if $p\in \mathcal{P}_{2k}$ then  $p(x)> -p_0$ for all $x\in\arn$ and for some $p_0>0$. Hence $p(x)+p_0>0.$ Analogously, by taking a $q_0>0$ such that $q(\xi)+q_0>0$ for all $\xi\in\arn$, we associate to the operator $T$ a H\"ormander metric $g$ given by
\beq g=g^{(p,q)}=\frac{dx^2}{(p_0+q_0+p(x)+q(\xi))^{\frac{1}{k}}}+\frac{d\xi ^2}{(p_0+q_0+p(x)+q(\xi))^{\frac{1}{\ell}}}\, .\label{anhm01}\eq 

We analyse then the operator $T$ within the framework of H\"ormander's $S(m,g)$ classes, the topic of Section \ref{SEC:anharmonic}. However, in Section \ref{SEC:anharmonic-notes} we give some simplifications from the presentational point of view, showing that the Weyl-H\"ormander classes corresponding to the metric \eqref{anhm01} for the operators \eqref{qpop5} can be described, independently of the lower order terms, by the following symbol classes:

\smallskip
{\em for $m\in\ar$, and $k, \ell$  integers $\geq 1$, the class   $\Sigma_{k,\ell}^m$ consists of all smooth functions $a\in C^{\infty}(\ardn)$ such that
 \beq\label{sigmacli}|\partial_{x}^{\beta}\partial_{\xi}^{\alpha}a(x,\xi)|\leq C_{\alpha\beta}
 (1+|x|^{k}+|\xi|^{\ell})^{m-\frac{|\beta|}{k}-\frac{|\alpha|}{\ell}}
 \eq 
 holds for all $x,\xi\in\Rn$,}
see Definition \ref{DEF:sigmas} and the subsequent statements.
Such a description also allows one to make use of these classes and their applications without any profound knowledge of the Weyl-H\"ormander theory. For example, for $k=l=1$, one recovers the well-known family of Shubin classes associated to the harmonic oscillator. We refer to Section \ref{SEC:anharmonic-notes} for main properties of these classes $\Sigma_{k,\ell}^m$.

\medskip
Regarding the spectral analysis of the anharmonic oscillators, we will obtain 
 the Schatten-von Neumann properties for the negative powers of such operators in the setting of $S(m,g)$ classes. The membership to Schatten-von Neumann classes and Weyl inequalities allow one to deduce the rate of decay of the eigenvalues for those negative powers. By looking at  the inverses of these negative powers one immediately recovers estimates for the rate of growth of eigenvalues of the original anharmonic oscillators. We point out that the spectral asymptotics for these operators has also been studied in \cite{hr:sap3, hr:anosc} in a rather complicated way.  
  Here we will give a simple proof for the main term of the spectral asymptotics applying a result by Buzano and Toft \cite{Buzano-Toft:Schatten-Weyl-JFA-2010} for our analysis.  The investigation 
of such properties within these  classes started with H\"ormander \cite{hor:assy}.  Other works on Schatten-von Neumann classes within the Weyl-H\"ormander calculus can be found in \cite{Toft:Schatten-AGAG-2006}, \cite{Toft:Schatten-modulation-2008}. See also \cite{dr:suffkernel}, \cite{dr13:schatten}, \cite{dr13a:nuclp} for several symbolic and kernel criteria on different types of domains.  For the spectral theory of non-commutative versions of the harmonic oscillator we refer the reader to the interesting work of Parmeggiani et al \cite{pab2:b}, \cite{par:1a}, \cite{par:1b}, \cite{par:1c}, \cite{par:1e} and \cite{par:1f}.\\

\medskip
The main results of this work give the order of the corresponding Schatten-von Neumann class for the negative powers of anharmonic oscillators.  The special case of the trace class is also distinguished. That is the contents of Theorem \ref{sch1mf} and Corollary \ref{cor.b.1}. From those we derive estimates for the rate of growth of anharmonic oscillators in \eqref{EQ:growthb}. We give a simple proof for the main term of the spectral asymptotics for our operators. We also provide examples of the above results for the anharmonic oscillators  arising in the study the Heisenberg, Engel and Cartan group in Section \ref{SEC:examples}.

\section[Preliminaries]{Preliminaries}
In this section we first briefly review some basic elements of the Weyl-H{\"o}rmander calculus.  For a comprehensive study
 on this important theory we refer the reader to  \cite{ho:apde2}, \cite{le:book}, \cite{b-l:qua}. Second, we  recall basic properties of Schatten-von Neumann classes. \\
 
We recall below the main quantizations that we are going to use:

\begin{defn} For $a=a(x,\xi)\in S'(\ardn)$ ($x\in \mathbb{R}^{n}$ and
$\xi\in \mathbb{R}^{n}$) and $t\in\ar$, we define the {\em t-quantization} as the operator $a_t(x,D):S(\mathbb{R}^{n})\rightarrow S(\mathbb{R}^{n})$ given by \[a_t(x,D)u(x)=(2\pi)^{-n}\int_{\arn}\int_{\arn} e^{i(x-y)\xi}a(tx+(1-t)y,\xi)u(y)dyd\xi.\]
\end{defn}

The case $t=1$ is known as the {\em Kohn-Nirenberg  quantization} and we also write $a(x,D)$ instead of $a_1(x,D)$. The case $t=\half$
 is know as the {\em Weyl quantization} and we also denote it by $a^w$. 
 
 Quantizations with nonlinear functions of $x$ and $y$ instead of  $tx+(1-t)y$ have been recently considered in \cite{er}. 

The Weyl quantization has fundamental relations with the symplectic structure of $\arn\times\arn=T^* \arn$; see Theorem \ref{thm.com.} on the composition of Weyl quantizations.  One of those already arises from the symbol of the composition.
\begin{defn} Let $a(x,\xi),b(x,\xi)\in S(\ardn)$. We define \[(a\#b)(X)=\pi^{-2n}\int_{\mathbb{R}^{2n}\times \mathbb{R}^{2n}}e^{-2i\sigma(X-Y_{1},X-Y_{2})}a(Y_{1})b(Y_{2})dY_{1}dY_{2},\]
where $\sigma(X,Y)=y\cdot\xi-x\cdot\eta$ for $X=(x,\xi)$ and $Y=(y,\eta)$.\end{defn}

The operation $\#$ becomes useful in order to describe the composition $a^w\circ b^{\omega}$, %
 indeed one has $a^w\circ b^{\omega}=(a\#b)^{\omega}$, see Theorem \ref{thm.com.}.\\

We shall now recall the definition of H{\"o}rmander metrics on the phase space.
\begin{defn}\label{defmet98} For $X \in \ardn$ let $g_{X}(\cdot)$ be a positive definite quadratic form on $\ardn$. We say that
$g_\cdot(\cdot)$ is a H\"ormander's metric if the following three conditions are satisfied:
\begin{enumerate}
\item {\bf Continuity or slowness}- There exist a constant $C>0$ such that 
\[g_{X}(X-Y)\leq C^{-1}\implies \left(\frac{g_X(T)}{g_Y(T)}\right)^{\pm 1}\leq 1,\]
for all $T\in\ardn$.
\item {\bf Uncertainty principle}- For $Y=(y,\eta)$ and
$Z=(z,\zeta)$, we define\\ $\sigma(Y,Z):=z \cdot \eta-y\cdot \zeta$,
and \[g_{X}^{\sigma}(T):=\sup_{W\neq 0}
\frac{\sigma(T,W)^{2}}{g_{X}(W)}.\] We say that $g$ satisfies the {\em uncertainty principle }
if \[\lambda_{g}(X)=\inf_{T\neq 0}
\left(\frac{g_{X}^{\sigma}(T)}{g_{X}(T)}\right)^{1/2}\geq 1 ,\] for
all $X, T\in\ardn$. The {\em   uncertainty parameter} or the {\em  Planck function} associated to $g$ is defined by
 $h_g(X)=(\lambda_{g}(X))^{-1}$.\\
\item {\bf Temperateness}- We say  that $g$ is temperate if there exist $\overline{C}>0$ and $J\in \mathbb{N}$ such that
\beq \left( \frac{g_{X}(T)}{g_{Y}(T)}\right)^{\pm1}\leq \overline{C}(1+g_{Y}^{\sigma}(X-Y))^{J},\label{tempc1x}\eq
for all $X, Y, T\in\ardn$.
\end{enumerate}
\end{defn}

\begin{rem}\label{rem.split.m} (i) For a {\em{split metric}} $g$, i.e. for a metric of the type
\[g_{X}(dx,d\xi)=\sum_{i=1}^{n}\left(\frac{dx_{i}^{2}}{a_{i}(X)}+\frac{d\xi_{i}^{2}}{b_{i}(X)}\right),\]
where $a_{i}(X)$ and $b_{i}(X)$ are positive functions, one can prove that 
\[g^{\sigma}_{X}(dx,d\xi)=\sum_{i=1}^{n}\left(b_{i}(X)dx_{i}^{2}+a_{i}(X)d\xi_{i}^{2}\right).\]
(ii) A special case of (i) is the one of {\em{symmetrically split metric}}, i.e. a metric of the type 
\[g_{X}(dx,d\xi)=\frac{dx^{2}}{a(X)}+\frac{d\xi^{2}}{b(X)}.\]

The metric \eqref{anhm01} is an example of a such type.\\
(iii) If $g$ is a split metric  one can prove the following formula for $\lambda_g$ from Definition \ref{defmet98},
\beq\lambda_{g}(X)=\min _j\sqrt{a_j(X)b_j(X)}\label{lambda1a}.\eq
(iv) In particular, if $g$ is symmetrically split like in (ii) then
\beq\lambda_{g}(X)=\sqrt{a(X)b(X)}\label{lambda1b}.\eq
\end{rem}
The classical weight $\langle\xi\rangle^m=(1+|\xi|)^m$ is generalised in the following way for a corresponding H{\"o}rmander metric.
\begin{defn} Let $M:\ardn\rightarrow (0,\infty)$ be a function. We say that 
$M$ is $g$-{{\em continuous}} if there exists $\tilde{C}>0$
such that
\[g_{X}(X-Y)\leq \frac{1}{\tilde{C}}\implies\left( \frac{M(X)}{M(Y)}\right)^{\pm1}\leq \tilde{C}.\]
\end{defn}
\begin{defn}  Let $M:\ardn\rightarrow (0,\infty)$ be a function. We say that
$M$ is  $g$-{\em temperate} if there exist $\tilde{C}>0$ and $N\in \mathbb{N}$
such that
\[\left( \frac{M(X)}{M(Y)}\right)^{\pm1}\leq \tilde{C}(1+g_{Y}^{\sigma}(X-Y))^{N}.\]
We will say that  $M$ is a $g$-{\em weight} if it is $g$-continuous and $g$-temperate.
\end{defn}
We are now ready to define $S(M,g)$ classes of symbols.  
\begin{defn} For a H{\"o}rmander metric $g$ and a $g$-weight $M$,  we denote by $S(M,g)$ the set of all smooth functions $a$ on $\ardn$ such that for any integer $k$ there exists $C_{k}>0$, such that for all
$X,T_{1},...,T_{k}\in \ardn$ we have 
\beq |a^{(k)}(X;T_{1},...,T_{k})|\leq C_{k}M(X)\prod_{i=1}^{k} g_{X}^{1/2}(T_{i}) .\label{inwhk}\eq
The notation $a^{(k)}$ stands for the $k^{th}$ derivative of $a$ and  $a^{(k)}(X;T_{1},...,T_{k})$ denotes the $k^{th}$  derivative of $a$ at $X$ in the directions $T_{1},...,T_{k}$. 
For $a\in S(M,g)$ we denote by $\parallel a\parallel_{k,S(M,g)}$ the minimum $C_{k}$ satisfying the above inequality. The class $S(M,g)$ becomes a Fr\'echet space endowed with the family of seminorms $\parallel \cdot\parallel_{k,S(M,g)}$.\end{defn}
One has the following composition theorem, see H\"{o}rmander \cite{ho:wc}.
\begin{thm}\label{thm.com.}
	Let $a \in S(M_1,g)\,,b \in S(M_2,g)$. Then,
	\[
	a^wb^w=(a\#b)^w\,,
	\]
	where for any given $N \in \mathbb{N}$
	\[
	(a\#b)(X)=\sum_{j=0}^{N-1} \frac{1}{j!}\left(\frac{i}{2}\sigma(D_X,D_Y)\right)^j a(X)b(Y)|_{X=Y}+r_N(X)\,,
	\]
	where $r_N \in S(M_1M_2 h_{g}^{N},g)$. In particular, for a given $N \in \mathbb{N}$,
	\[
	\sigma(D_X,D_Y)a(X)b(Y)|_{X=Y} \in S(M_1M_2 h_{g}^{N},g)\,.
	\]
\end{thm}
We note that by $\sigma(\cdot,\cdot)$ we have denoted the symplectic form on $T^{*}\arn$; that is for $X=(x,\xi)$ and $Y=(y,\eta)$,
\[
\sigma(D_X,D_Y):=D_yD_{\xi}-D_xD_{\eta}\,.
\]
\begin{rem}\label{se2a} 
Let $g$ be a H\"ormander metric. Instead of using the Planck function for the formulation
 of the statements we can equivalently employ the weight $\lambda_g$ defined by $\lambda_g=h_g^{-1}$. The function $\lambda_g$ is a $g$-weight for the metric $g$ (cf. \cite{ho:apde2}). Given a $g$-weight $M$, it is possible to construct an equivalent smooth weight $\widetilde{M}$ such that $\widetilde{M}\in S(M,g)$ (cf. \cite{ho:apde2}, \cite{le:book}). In particular, for $\lambda_g$ there exists an equivalent smooth weight $\widetilde{\lambda_g}$ such that $\widetilde{\lambda_g}\in S(\lambda_g,g)$. Hence, $\widetilde{\lambda_g}\in S(\widetilde{\lambda_g},g)$. A weight $M$ such that $M\in S(M,g)$ is called {\em regular}.   
Thus the weight $\lambda_g$ and consequently the Planck function $h_g$ can be assumed to be regular.

Let $0\leq \delta\leq\rho\leq 1$ and $\delta<1$. The metric $g^{\rho,\delta}$ is defined by
\[g_X^{\rho,\delta}(dx,d\xi)=\jp^{\delta}dx^{2}+\frac{d\xi^{2}}{\jp^{\rho}},\]
where $\jp:=(1+|\xi|^2)^{\half}$. It is well known that $g^{\rho,\delta}$ is a H\"ormander metric, and that with it one recovers the $S_{\rho,\delta}^m$ classes i.e., $S_{\rho,\delta}^m=S(\jp^m,g^{\rho,\delta})$. The uncertainty parameter  $\lambda_g$ for $g=g^{\rho,\delta}$ is given by 
 \[\lambda_g(X)=\langle\xi\rangle^{(\rho-\delta)}.\]
In the special case $\rho=1,\delta=0$, one has $\lambda_g(X)=\langle\xi\rangle$. The weight $\lambda_g$ can be seen as an extension of the basic one $\langle\xi\rangle$, for the $(\rho,\delta)$ classes. The symbols in $S(\lambda_g^{\mu},g)$ for $\mu\in\ar$ can be seen as the symbols of {\em order }  $\mu$ with respect to the metric $g$. In particular, $\lambda_g^{\mu}$ is a symbol of order $\mu$ with respect to $g$.
\end{rem}

We now recall the definition of Sobolev spaces adapted to the Weyl-H{\"o}r\-man\-der calculus. Here we adopt the Beals' definition for simplicity in the presentation of the basic theory. Comprehensive treatments on Sobolev spaces in this setting can be found in \cite{b-c:ef}, \cite{le:book}.

\begin{defn}\label{defi:sob} Let $g$ be a H{\"o}rmander metric and $M$ a $g$-weight. We will call 
{\em Sobolev space relative to }$M$ and it will be denoted by $H(M,g)$, the set of tempered distributions $u$ on $\arn$ such that
\beq 
a^wu\in L^2(\arn),\,\,\forall a\in S(M,g). 
\eq
\end{defn}

\begin{rem} \label{rem:1weights} We observe that the definition above requires a test over all the  symbols in $S(M,g)$. In contrast, we note that the classical Sobolev spaces $H^m=H(\langle\xi\rangle^m,g^{1,0})$ are defined by the condition on the tempered distribution $u$:
\[a(x,D)u\in L^2(\arn),\,\mbox{ with } a(x,\xi)=\langle\xi\rangle^m. \]  
\indent This means that, in the classical case the Sobolev space is defined by a fixed symbol. 
\end{rem}
The action of the Weyl quantization on the Sobolev spaces is determined by the following theorem (cf. \cite{le:book},  \cite{b-c:ef}).
\begin{thm}\label{tl2} Let $g$ be a H{\"o}rmander metric, $M$ and $M_{1}$ be two $g$-weights. For every $a\in S(M,g)$, we have $$a^w:H(M_{1},g)\rightarrow H(M_{1}/M,g).$$
\end{thm}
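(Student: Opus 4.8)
The plan is to deduce this mapping property from the fundamental composition theorem of the Weyl--H\"ormander calculus rather than by any direct computation. First I would check that the quotient $M_1/M$ is again a $g$-weight, so that the space $H(M_1/M,g)$ on the right-hand side is meaningful in the sense of Definition~\ref{defi:sob}: taking reciprocals and products in the $g$-continuity and $g$-temperateness inequalities for $M$ and $M_1$ shows at once that $M_1/M$ inherits both properties (the constants multiply and the temperateness exponents add). I would also record that, since every $g$-weight is polynomially bounded, a symbol $a\in S(M,g)$ lies in $S'(\ardn)$ and $a^w$ extends to a continuous map $S'(\arn)\to S'(\arn)$; hence $a^wu$ is a bona fide tempered distribution whenever $u\in H(M_1,g)$, and the statement $a^wu\in H(M_1/M,g)$ is at least well posed.

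The heart of the argument is as follows. By Definition~\ref{defi:sob}, to prove $a^wu\in H(M_1/M,g)$ it suffices to show that $c^w(a^wu)\in L^2(\arn)$ for every $c\in S(M_1/M,g)$. Using the composition formula $c^w\circ a^w=(c\#a)^w$ recalled above, this is the same as $(c\#a)^wu\in L^2(\arn)$. At this point I would invoke the basic boundedness of the Moyal product in this setting (cf.\ \cite{ho:apde2}, \cite{le:book}): for any $g$-weights $m_1,m_2$ the map $\#\colon S(m_1,g)\times S(m_2,g)\to S(m_1 m_2,g)$ is well defined and continuous. Applying it with $m_1=M_1/M$ and $m_2=M$ gives $c\#a\in S(M_1,g)$; and since $u\in H(M_1,g)$, the very definition of that Sobolev space yields $(c\#a)^wu\in L^2(\arn)$, which is exactly what is needed. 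As $c$ was an arbitrary element of $S(M_1/M,g)$, this shows $a^wu\in H(M_1/M,g)$.

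To upgrade this set-theoretic statement to the continuity of $a^w\colon H(M_1,g)\to H(M_1/M,g)$ --- which is the content one really wants --- I would either appeal to the closed graph theorem (both Sobolev spaces are Fr\'echet when equipped with the seminorms $u\mapsto\|b^wu\|_{L^2}$, $b$ ranging over bounded subsets of the relevant symbol class), or else track the constants quantitatively through the continuity of $\#$ together with the $L^2$-boundedness of operators with symbols in $S(1,g)$. The single nontrivial input here, which I would cite rather than reprove, is precisely the continuity of the composition $\#$ on Weyl--H\"ormander classes; its proof uses all three structural conditions on $g$ (slowness, the uncertainty principle, and temperateness), and it is the main obstacle that would otherwise have to be confronted. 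Everything else in the argument is bookkeeping with the definitions.
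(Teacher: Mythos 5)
The paper does not prove Theorem~\ref{tl2}: it is recalled from Bony--Chemin \cite{b-c:ef} without argument, so there is no internal proof to compare against. Your derivation is correct and is precisely the standard one behind that citation: with Beals's definition of $H(M,g)$, the whole content reduces to the inclusion $S(M_1/M,g)\,\#\,S(M,g)\subset S(M_1,g)$, i.e.\ the composition theorem of the Weyl--H\"ormander calculus, which you rightly cite rather than reprove; the preliminary checks (that $M_1/M$ is again a $g$-weight and that $a^w u$ is a well-defined tempered distribution) and the closed-graph remark for continuity are all sound.
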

It is customary to identify $H(1,g)$ ($M=1$) with $L^{2}$ (cf.  \cite{le:book}, \cite{b-c:ef}):
\begin{thm} \label{l2}For a H\"ormander's metric $g$ we have $H(1,g)=L^{2}(\arn)$.\end{thm}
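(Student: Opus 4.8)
The plan is to prove the two set-theoretic inclusions $L^{2}(\arn)\subseteq H(1,g)$ and $H(1,g)\subseteq L^{2}(\arn)$ separately, the second being immediate and the first resting on one substantial input from the theory.

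For $H(1,g)\subseteq L^{2}(\arn)$ I would simply test the defining condition of Definition \ref{defi:sob} against the constant symbol $a\equiv 1$. This symbol lies in $S(1,g)$ for \emph{any} H\"ormander metric $g$: with $M\equiv 1$ one has $a^{(0)}(X)=1$ and $a^{(k)}(X;T_{1},\dots,T_{k})=0$ for $k\geq 1$, so \eqref{inwhk} holds with $C_{0}=1$ and $C_{k}=0$. Its Weyl quantization is the identity operator, $1^{w}=\mathrm{Id}$, by Fourier inversion. Hence, if $u\in H(1,g)$, then the requirement $a^{w}u\in L^{2}(\arn)$ for all $a\in S(1,g)$ yields in particular $u=1^{w}u\in L^{2}(\arn)$.

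For the reverse inclusion the key ingredient is the Calder\'on--Vaillancourt type theorem for the Weyl--H\"ormander calculus: for every H\"ormander metric $g$ and every $a\in S(1,g)$, the operator $a^{w}$ is bounded on $L^{2}(\arn)$, with operator norm dominated by a finite number of the seminorms $\| a\|_{k,S(1,g)}$ (see \cite{b-c:ef}, \cite{ho:apde2}; the proof goes through a $g$-adapted partition of the phase space followed by the Cotlar--Stein lemma, the uncertainty principle $h_{g}\leq 1$ being precisely what makes the almost-orthogonality estimates close). Granting this, if $u\in L^{2}(\arn)$ then $a^{w}u\in L^{2}(\arn)$ for every $a\in S(1,g)$, which is exactly the statement $u\in H(1,g)$. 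Note that Theorem \ref{tl2} cannot be invoked here as a substitute, since with $M_{1}=M=1$ it only gives $a^{w}\colon H(1,g)\to H(1,g)$, which is circular.

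Combining the two inclusions gives $H(1,g)=L^{2}(\arn)$ as vector spaces, and in fact as topological vector spaces: $H(1,g)$ carries the initial topology from the maps $u\mapsto a^{w}u$ into $L^{2}(\arn)$, $a\in S(1,g)$; choosing $a\equiv 1$ recovers the $L^{2}$-norm, so the $H(1,g)$-topology is at least as fine as the $L^{2}$-topology, while the bound $\| a^{w}u\|_{L^{2}}\leq C_{a}\| u\|_{L^{2}}$ furnished by the Calder\'on--Vaillancourt theorem shows the reverse. The only genuinely nontrivial point is the $L^{2}$-boundedness of operators with symbols in $S(1,g)$, which I would quote from \cite{ho:apde2}, \cite{b-c:ef} rather than reprove, it being one of the cornerstones of the theory; everything else is routine bookkeeping.
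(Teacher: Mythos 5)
Your argument is correct, but there is nothing in the paper to compare it against: Theorem \ref{l2} is stated there without proof, as a standard fact quoted from \cite{b-c:ef} (``It is customary to identify $H(1,g)$ with $L^2$''). What you have written is essentially the proof one finds in that reference. The inclusion $H(1,g)\subseteq L^{2}(\arn)$ via the constant symbol is sound: $a\equiv 1$ satisfies \eqref{inwhk} with $M\equiv 1$ trivially, and $1^{w}=\mathrm{Id}$ by Fourier inversion, so Definition \ref{defi:sob} forces $u\in L^{2}(\arn)$. The reverse inclusion does rest entirely on the $L^{2}$-boundedness of $a^{w}$ for $a\in S(1,g)$, which is indeed the one nontrivial input and is legitimately quoted rather than reproved (it is Theorem 18.6.3 in \cite{ho:apde2}; the paper itself invokes the same fact later when asserting $L^2$-boundedness for the class $\Sigma_{k,\ell}^{0}$). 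Your observation that Theorem \ref{tl2} with $M_{1}=M=1$ only yields $a^{w}\colon H(1,g)\to H(1,g)$ and is therefore useless here is accurate. The only point worth a word more of care is that for $u\in L^{2}(\arn)$ one should note that the bounded extension of $a^{w}$ to $L^{2}$ agrees with its action on $u$ as a tempered distribution (by density of $\mathcal{S}(\arn)$ in $L^{2}(\arn)$ and continuity of $a^{w}$ on $\mathcal{S}'(\arn)$); this is the ``routine bookkeeping'' you allude to, and it is indeed routine.
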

We shall now recall some basic properties of Schatten-von Neumann classes. Let $H$ be a complex separable Hilbert space endowed with an inner product denoted 
by $(\cdot,\cdot)$, and let $T:H\rightarrow H$ be a linear compact operator. If we denote by $T^*:H\rightarrow H$ the adjoint of  $T$, then the linear operator $(T^*T)^\half:H\rightarrow H$ is positive and compact. Let $(\psi_k)_k$ be an orthonormal basis for $H$ consisting of eigenvectors of $|T|=(T^*T)^\half$, and let $s_k(T)$ be the eigenvalue corresponding to the eigenvector 
$\psi_k$, $k=1,2,\dots$. The non-negative numbers $s_k(T)$, $k=1,2,\dots$, are called the singular values of $T:H\rightarrow H$. 
If 
$$
\sum_{k=1}^{\infty} s_k(T)<\infty ,
$$
then the linear operator $T:H\rightarrow H$ is said to be in the {\em trace class} $S_1$. It can be shown that  $S_1(H)$ is a Banach space in which the norm $\|\cdot\|_{S_1}$ is given by 
$$
\|T\|_{S_1}= \sum_{k=1}^{\infty} s_k(T),\,T\in S_1,
$$
multiplicities counted.
Let $T:H\rightarrow H$ be an operator in $S_1(H)$ and let  $(\phi_k)_k$ be any orthonormal basis for $H$. Then, the series $\sum\limits_{k=1}^{\infty} (T\phi_k,\phi_k)$   is absolutely convergent and the sum is independent of the choice of the orthonormal basis $(\phi_k)_k$. Thus, we can define the trace $\Tr(T)$ of any linear operator
$T:H\rightarrow H$ in $S_1$ by 
$$
\Tr(T):=\sum_{k=1}^{\infty}(T\phi_k,\phi_k),
$$
where $\{\phi_k: k=1,2,\dots\}$ is any orthonormal basis for $H$. If the singular values
are square-summable, $T$ is called a {\em Hilbert-Schmidt} operator. It is clear that every trace class operator is a Hilbert-Schmidt operator. More generally, if $0<p<\infty$ and the sequence of singular values is $p$-summable, then $T$ 
is said to belong to the Schatten-von Neumann class  ${S}_p(H)$, and it is well known that each ${S}_p(H)$ is an ideal in $\mathcal{L}(H)$. If $1\leq p <\infty$, a norm is associated to ${S}_p(H)$ by
 \[
 \|T\|_{S_p}=\left(\sum\limits_{k=1}^{\infty}(s_k(T))^p\right)^{\frac{1}{p}}.
 \] 
If $1\leq p<\infty$ 
 the class $S_p(H)$ becomes a Banach space endowed by the norm $\|T\|_{S_p}$. If $p=\infty$ we define $S_{\infty}(H)$ as the class of bounded linear operators on $H$, with 
$\|T\|_{S_\infty}:=\|T\|_{op}$, the operator norm.
In the case $0<p<1$  the quantity $\|T\|_{S_p}$ only defines
 a quasinorm, and $S_p(H)$ is also complete.

The Schatten-von Neumann classes are nested, with
\begin{equation}\label{EQ:Sch-nested}
{S}_p\subset {S}_q,\,\,\textrm{ if }\,\, 0<p<q\leq\infty.
\end{equation}

A  basic introduction to the study of the trace class is included in the book \cite{lax:fa} by Peter Lax. For the  basic theory of Schatten-von Neumann classes we refer the reader to \cite{gokr}, \cite{r-s:mp}, \cite{sim:trace}, \cite{sch:id}.


\section[Anharmonic oscillators]{Anharmonic oscillators}
\label{SEC:anharmonic}

In this section we specifically begin the study of our anharmonic oscillators. 
 We first introduce a special class of polynomials on $\arn$ which determine the operators that we will consider. For an integer $k\geq 1$ we define:
\beq
\mathcal{P}_{2k} =\left\{p:\arn\rightarrow\ar: p {\mbox{ is a polynomial of order }}\,\,2k {\mbox{ such that }}\,\liminf_{|x|\rightarrow\infty}\frac{p(x)}{|x|^{2k}}>0\right\}.
\eq
We now take $q=q(\xi)\in \mathcal{P}_{2\ell}$, $p=p(x)\in\mathcal{P}_{2k}$, where $k,\ell$ are integers $\geq 1$. We consider operators of the form
\beq\label{qpop} T=q(D)+p(x).\eq
We observe that since  $p\in \mathcal{P}_{2k}$ then  $p(x)> -p_0$ for all $x\in\arn$ and for some $p_0>0$. Hence $p(x)+p_0>0.$ Similarly,
there exists $q_0>0$ such that $q(\xi)+q_0>0$ for all $\xi\in\arn$. A prototype of the operators \eqref{qpop} is given in the form
\beq A=(-\Delta)^{\ell}+|x|^{2k}\label{gahw1b}\eq
where $k,\ell$ are integers $\geq 1$. 

We recall that spectral properties for operators of the form $-\frac{d^{2\ell}}{dx^{2\ell}}+x^{2k}+p_1(x)$, where $p_1$ is a suitable polynomial of order $2k-1$ have been studied by Helffer and Robert (cf. \cite{hr:anosc}, \cite{hr:anosc2}).\\

We associate to the operator $T=q(D)+p(x)$, the following metric 
\beq g=g^{(p,q)}=\frac{dx^2}{(p_0+q_0+p(x)+q(\xi))^{\frac{1}{k}}}+\frac{d\xi ^2}{(p_0+q_0+p(x)+q(\xi))^{\frac{1}{\ell}}}\, .\label{anhmetT}\eq 
It is clear that in the definition of  $g^{(p,q)}$ we can assume $p_0\geq 1$ obtaining an equivalent metric. \\

We note that if $k=\ell$ in $A$, the metric $g$ is equivalent 
to 
\beq \label{gkl}g=\frac{dx^2}{1+|x|^2+|\xi|^2}+\frac{d\xi ^2}{1+|x|^2+|\xi|^2},\eq 
which corresponds to the symplectic metric defining the  Shubin classes. However, the general case here is more delicate.\\

We now start by showing how the metric $g^{(p,q)}$ is constructed. In the following theorem, the membership $a\in S(L,G)$ should be understood in the sense that the inequality \eqref{inwhk} holds for a Riemannian metric $G$ on the phase-space and a strictly positive function $L$ on the phase-space, i.e. independent of the fact   whether or not $G$ is H\"ormander metric and $L$ just a $G$-weight.  These facts will be shown afterwards.

\begin{thm}\label{fgnyt}  Let $g=g^{(p,q)}$ be the metric defined by \eqref{anhmetT}. Then  
\[q(\xi)+p(x)\in S(p_0+q_0+q(\xi)+p(x),g^{(p,q)}).\]
\end{thm}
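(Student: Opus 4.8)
The plan is to verify the defining inequality \eqref{inwhk} directly for the symbol $a(x,\xi)=q(\xi)+p(x)$ with weight $L=L(x,\xi)=p_0+q_0+p(x)+q(\xi)$ and the metric $G=g^{(p,q)}$. Write $R(x,\xi):=p_0+q_0+p(x)+q(\xi)$, so that $R>0$ everywhere, $L=R$, and the metric reads $g^{(p,q)}_X=R^{-1/k}\,dx^2+R^{-1/\ell}\,d\xi^2$. Since $a$ differs from $R$ only by the constant $p_0+q_0$, every derivative of $a$ of order $\ge 1$ equals the corresponding derivative of $R$, and the order-zero estimate $|a(x,\xi)|\le C\,R(x,\xi)$ is immediate because $p(x)+p_0>0$ and $q(\xi)+q_0>0$ force $|q(\xi)+p(x)|\le R$. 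Hence it suffices to bound, for a multi-index decomposition into unit directions, the quantity $|\partial_x^\beta\partial_\xi^\alpha a(x,\xi)|$ by $C_{\alpha\beta}\,R(x,\xi)\,R(x,\xi)^{-|\beta|/(2k)}R(x,\xi)^{-|\alpha|/(2\ell)}$, i.e. to show
\[
|\partial_x^\beta\partial_\xi^\alpha p(x)|\le C_\beta\,R(x,\xi)^{1-\frac{|\beta|}{2k}},\qquad
|\partial_x^\beta\partial_\xi^\alpha q(\xi)|\le C_\alpha\,R(x,\xi)^{1-\frac{|\alpha|}{2\ell}},
\]
with mixed derivatives vanishing unless one of the multi-indices is zero. (Here I use $g_X^{1/2}(T_i)$ applied to coordinate directions $\partial_{x_j}$ and $\partial_{\xi_j}$, which contribute factors $R^{-1/(2k)}$ and $R^{-1/(2\ell)}$ respectively; the general $T_i$ follows by multilinearity and Cauchy--Schwarz, exactly as in H\"ormander's formalism.)

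The first and main step is the scalar inequality for a single polynomial: if $p\in\mathcal P_{2k}$ then for every multi-index $\beta$,
\[
|\partial_x^\beta p(x)|\le C_\beta\,(1+p(x)+p_0)^{1-\frac{|\beta|}{2k}}\qquad(x\in\arn).
\]
To prove this I would argue as follows. Because $\liminf_{|x|\to\infty}p(x)/|x|^{2k}>0$, there exist $c_0>0$ and $\rho>0$ with $p(x)\ge c_0|x|^{2k}$ for $|x|\ge\rho$, hence $1+p(x)+p_0\gtrsim \langle x\rangle^{2k}$ for all $x$ (on the ball $|x|\le\rho$ both sides are comparable to $1$). On the other hand $\partial_x^\beta p$ is a polynomial of degree $\le 2k-|\beta|$ (it is $0$ if $|\beta|>2k$), so $|\partial_x^\beta p(x)|\lesssim \langle x\rangle^{2k-|\beta|}=(\langle x\rangle^{2k})^{1-|\beta|/(2k)}\lesssim (1+p(x)+p_0)^{1-|\beta|/(2k)}$, using that $1-|\beta|/(2k)\ge 0$ when the left side is nonzero and that $\langle x\rangle^{2k}\lesssim 1+p(x)+p_0$. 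The same argument applies verbatim to $q\in\mathcal P_{2\ell}$ with $2\ell$ in place of $2k$.

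The second step upgrades these to the two-variable statement by noting that $R(x,\xi)=(p_0+q_0+p(x))+q(\xi)\ge \max\{\,p_0+p(x),\ q_0+q(\xi)\,\}$ up to harmless additive constants (recall we may assume $p_0,q_0\ge 1$), so $(1+p(x)+p_0)^{1-|\beta|/(2k)}\le C\,R(x,\xi)^{1-|\beta|/(2k)}$ whenever the exponent is nonnegative, and likewise for $q$; the negativity of the exponent only occurs when the corresponding derivative is identically zero. Assembling: for $|\beta|,|\alpha|\ge 1$ a mixed derivative $\partial_x^\beta\partial_\xi^\alpha(q(\xi)+p(x))=0$ and there is nothing to check; for $\alpha=0$ we get $|\partial_x^\beta a|=|\partial_x^\beta p|\le C_\beta R^{1-|\beta|/(2k)}\le C_\beta\,R\cdot R^{-|\beta|/(2k)}$, which is exactly $C_\beta\,L(X)\prod g_X^{1/2}(T_i)$ with the $T_i$ taken among the $x$-directions; symmetrically for $\beta=0$. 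Since a general $k$-th derivative in directions $T_1,\dots,T_k$ splits, by multilinearity, into a sum of such coordinate contributions with bounded combinatorial coefficients, inequality \eqref{inwhk} follows for all $k$, proving $q(\xi)+p(x)\in S(R,g^{(p,q)})$. The only genuinely delicate point is the first step — extracting the lower bound $1+p(x)+p_0\gtrsim\langle x\rangle^{2k}$ uniformly in $x$ from the one-sided $\liminf$ hypothesis, and checking it is compatible with the sign of the exponent $1-|\beta|/(2k)$ — everything after that is bookkeeping.
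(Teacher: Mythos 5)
Your proof is correct, but it follows a noticeably more direct route than the paper's. The paper proceeds in three stages: it first treats the prototype symbol $|x|^{2k}+|\xi|^{2\ell}$, deriving the exponents $s=k/\ell$, $t=1$ of the metric by estimating $|\partial_{\xi_i}^j(|x|^{2k}+|\xi|^{2\ell})|\le (2\ell)^j|\xi|^{2\ell-j}$ and then controlling the cross terms $|\xi|^{2\ell-j}|x|^{js}$ via Young's inequality $uv\le u^p/p+v^q/q$ with carefully chosen conjugate exponents; it then passes to $a|\xi|^{2\ell}+b|x|^{2k}$ by rescaling $\sigma_{ab}(x,\xi)=\sigma(b^{1/2k}x,a^{1/2\ell}\xi)$, and finally handles general $p\in\mathcal P_{2k}$, $q\in\mathcal P_{2\ell}$ by comparing their derivatives with those of the rescaled prototype. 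You instead work with the general polynomials from the outset: your key lemma $|\partial_x^\beta p(x)|\le C_\beta(1+p_0+p(x))^{1-|\beta|/(2k)}$ rests on degree counting ($\deg \partial^\beta p\le 2k-|\beta|$) together with the lower bound $1+p_0+p(x)\gtrsim\langle x\rangle^{2k}$ extracted from the $\liminf$ hypothesis, and you replace the Young-inequality step by the simpler observation that $t\mapsto t^s$ is monotone for $s\ge 0$, so the one-variable weight is dominated by the joint weight $R$ raised to the same nonnegative power, the negative-exponent case being vacuous since the derivative then vanishes. This buys a shorter, self-contained argument; what it loses relative to the paper is the explicit tracking of how the structural constants depend on the coefficients of $p$ and $q$ (the point of Stage 2 and Remark \ref{mtclz}), which the paper exploits later for the Lie group examples. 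One small point to tighten: the order-zero bound $|p(x)+q(\xi)|\le CR(x,\xi)$ is not literally ``immediate'' from positivity of $p+p_0$ and $q+q_0$ alone (when $p+q<0$ one only gets $|p+q|<p_0+q_0$); it does follow once you normalize $p_0$ so that $p+p_0\ge 1$, hence $R\ge 1$, which is exactly the harmless adjustment you and the paper both allow.
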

\begin{proof} In order to prove Theorem \ref{fgnyt} we first make some observations on the construction of the metric \eqref{anhmetT}:\\
\noindent {\bf Step 1.} The case $A=(-\Delta)^{\ell}+|x|^{2k}$. \\
We consider the prototype case of an operator of the form $A=(-\Delta)^{\ell}+|x|^{2k}$. 
 In this case the metric \eqref{anhmetT} is equivalent to 
\beq g^{(k,\ell)}=\frac{dx^2}{(1+|x|^{2k}+|\xi|^{2\ell})^{\frac{1}{k}}}+\frac{d\xi ^2}{(1+|x|^{2k}+|\xi|^{2\ell})^{\frac{1}{\ell}}}.\label{anhmet012}\eq 

We start by seeing  how the metric \eqref{anhmet012} arises from the operator $A$.
 The symbol of $A$ is $(2\pi)^{2\ell}|\xi|^{2\ell}+|x|^{2k}$, after  some rescaling and in order to simplify the calculations we consider instead the symbol  
\[\sigma(x,\xi)=|x|^{2k}+|\xi|^{2\ell}.\]

In order to obtain the metric \eqref{anhmet012} from the analysis of the derivatives of $\sigma(x,\xi)$ we will denote the metric we are searching for by $g$ and  we note that for $1\leq j\leq 2\ell$:

\[ |\partial_{\xi_i}^j (|x|^{2k}+|\xi|^{2\ell})|\leq (2\ell)^{j} |\xi|^{2\ell-j}.\]
On the other hand, we want to obtain an estimation of the form

\beq |\xi|^{2\ell-j}\leq C \frac{(1+|x|^{2k}+|\xi|^{2\ell})}{(1+|x|^{s}+|\xi|^{t})^j},\eq
where $s,t$ have to be found in order to get an inequality of the form \eqref{inwhk} for a suitable coefficient of a quadratic form in $d\xi ^2$. The values of $s,t$ will lead to \eqref{anhmet012}.\\

Equivalently, we should get for $0\leq j\leq 2\ell$:
\beq\label{eqwiu} |\xi|^{2\ell-j}(1+|x|^{js}+|\xi|^{jt})\leq C (1+|x|^{2k}+|\xi|^{2\ell}).\eq
The inequality  \eqref{eqwiu} lead us to two conditions (for large $|x|, |\xi|$):
\[(i)\,\, |\xi|^{2\ell-j}|\xi|^{jt}=|\xi|^{2\ell-(1-t)j}\leq C|\xi|^{2\ell}, \,\,(ii)\,\,|\xi|^{2\ell-j}|x|^{js}\leq C( |x|^{2k}+|\xi|^{2\ell}). \]
From (i) we obtain $t\leq 1$. In order to get (ii) we will find $1<p ,q<\infty$ where $\frac 1p+\frac 1q=1$ and such that
\beq\label{eqwiu32}|\xi|^{2\ell-j}|x|^{js}\leq C(|x|^{jsp}+|\xi|^{(2\ell-j) q})\leq C(|x|^{2k}+|\xi|^{2\ell}),\eq
from which we get the conditions 
\[(2\ell-j)q\leq 2\ell \,\,\mbox{ and }\,\, jsp\leq 2k .\]
 Therefore, for $j<2\ell$ we can take
\[q=\frac{2\ell}{2\ell-j}\geq 1,\] 
and observe that since $0\leq j\leq 2\ell-1 $ we have $ \frac{1}{2\ell}\leq\frac 1q \leq 1 .$ 
Now from $jsp\leq 2k$ and $\frac 1p\leq 1$ we obtain
\beq s\leq \frac{2k}{j}\frac 1p\leq \frac{2k}{j}, \,\,\mbox{ for all } j<2\ell .\label{jpsibb}\eq
On the other hand, $\frac 1j\geq \frac 1{2\ell}$. Hence, we have $s\leq \frac{2k}{2\ell}=\frac{k}{\ell}$.\\

We have omitted above the case $j=2\ell$, but its  analysis is simpler since $|\partial_{\xi_i}^{2\ell} (|x|^{2k}+|\xi|^{2\ell})|\leq (2\ell) ! $ and hence   \eqref{eqwiu} is still valid for $0\leq j\leq 2\ell$. In this way we get the coefficient of the metric for $d\xi^2$:
\[\frac{d\xi^2}{(1+|x|^{\frac{k}{\ell}}+|\xi|)^2}.\]

A similar analysis for  $|\partial_{x_i}^j (|x|^{2k}+|\xi|^{2\ell})|$, and exchanging the roles of $k$ and $\ell$ in the above estimations lead us to the coefficient of the metric for $dx^2$:
\[\frac{dx^2}{(1+|x|+|\xi|^{\frac{\ell}{k}})^2}.\]
Therefore, we have obtained the  metric 
\beq g(dx,d\xi)=\frac{dx^2}{(1+|x|+|\xi|^{\frac{\ell}{k}})^2}+\frac{d\xi^2}{(1+|x|^{\frac{k}{\ell}}+|\xi|)^2}\label{mety7} 
\eq
which completes the construction of the metric $g$ by observing that \eqref{mety7} is equivalent to \eqref{anhmet012}. It also proves that
\beq \label{mwfgt}|x|^{2k}+|\xi|^{2\ell}\in S(1+|x|^{2k}+|\xi|^{2\ell},g^{(k,\ell)}).\eq


In the search of $p$ for the analysis of \eqref{eqwiu} we have used the classical Young's inequality. 


Indeed by Young's inequality one has $uv\leq C_p(u^p+v^q),$ and in our case we have considered $u=|x|^{js}$ in order to get the estimate \eqref{eqwiu32}.\\

\noindent {\bf Step 2.} The case $\sigma_{ab}(x,D)=a(-\Delta)^{\ell}+b|x|^{2k}$ for $a,b>0$. \\

Here the metric \eqref{anhmetT} is of the form  
\beq\label{metabx} g^{(a,b)}=\frac{dx^2}{(1+b|x|^{2k}+a|\xi|^{2\ell})^{\frac{1}{k}}}+\frac{d\xi ^2}{(1+b|x|^{2k}+a|\xi|^{2\ell})^{\frac{1}{\ell}}}.\eq
In this case we should consider $\sigma_{ab}(x,\xi)=b|x|^{2k}+a|\xi|^{2\ell}$. We observe that $\sigma_{ab}(x,\xi)=|b^{\frac{1}{2k}}x|^{2k}+|a^{\frac{1}{2\ell}}\xi|^{2\ell}=\sigma(b^{\frac{1}{2k}}x, a^{\frac{1}{2\ell}}\xi )$. Now let $\alpha, \beta\in\ene_0^n$ , from \eqref{mwfgt} there exists a constant $C_{ab\alpha\beta}>0$ such that 
 \begin{align*} & |\partial_{x}^{\beta}\partial_{\xi}^{\alpha}\sigma(b^{\frac{1}{2k}}x, a^{\frac{1}{2\ell}}\xi))|  \\
 \leq & C_{ab\alpha\beta}(1+|b^{\frac{1}{2k}}x|^{2k}+|a^{\frac{1}{2\ell}}\xi|^{2\ell})\frac{1}{(1+|b^{\frac{1}{2k}}x|^{2k}+|a^{\frac{1}{2\ell}}\xi|^{2\ell})^{\frac{|\beta|}{2k}+\frac{|\alpha|}{2\ell}}}\\
\leq & C_{ab\alpha\beta}(1+b|x|^{2k}+a|\xi|^{2\ell})\frac{1}{(1+b|x|^{2k}+a|\xi|^{2\ell})^{\frac{|\beta|}{2k}+\frac{|\alpha|}{2\ell}}}.
\end{align*}
Therefore 
\beq \sigma_{ab}\in S(1+b|x|^{2k}+a|\xi|^{2\ell},g^{(a,b)})\label{sab3n}.\eq
The constants $C_{ab\alpha\beta}$ can be expressed in the form $C_{ab\alpha\beta}=C_{\alpha\beta}\cdot\max\{a,b,1\}$, where  
$C_{\alpha\beta}$ are the same structural constants as for the membership of  $\sigma$ in the symbol class $S(1+|x|^{2k}+|\xi|^{2\ell},g^{(k,\ell)})$. \\

\noindent {\bf Step 3.} The case $ T=q(D)+p(x)$, where  $q\in \mathcal{P}_{2\ell}$, $p\in\mathcal{P}_{2k}$, $k,\ell$  integers $\geq 1$. \\

In this case the symbol of $T$ is equivalent to  $T(x,\xi)=q(\xi)+p(x)$. From the assumptions on $q$ and $p$ there are $a,b>0$ and $q_0, p_0>0$ such that
\beq |p(x)|\leq b |x|^{2k}+p_0\, ,\,\,\,|q(\xi)|\leq a |\xi|^{2\ell}+q_0\label{dfx7}.\eq
Hence $|T(x,\xi)|\leq |q(\xi)|+|p(x)|\leq a |\xi|^{2\ell}+b |x|^{2k}+p_0+q_0 $. On the other hand, it is clear that from \eqref{sab3n} and comparing the  partial derivatives of $q(\xi)$ with the ones of $\sigma_{ab}$ we have
\begin{align*}|\partial_{x}^{\beta}\partial_{\xi}^{\alpha} T(x,\xi)|\leq &C_{ab\alpha\beta}|\partial_{x}^{\beta}\partial_{\xi}^{\alpha} (p_0+q_0+a|\xi|^{2\ell}+b|x|^{2k})|\\
\leq & C_{ab\alpha\beta} (p_0+q_0+b|x|^{2k}+a|\xi|^{2\ell})\frac{1}{(p_0+q_0+b|x|^{2k}+a|\xi|^{2\ell})^{\frac{|\beta|}{2k}+\frac{|\alpha|}{2\ell}}},
\end{align*}
where $C_{ab\alpha\beta}$ are as in Step 2.
 
Therefore,
\beq T(x,\xi)\in S(p_0+q_0+p(x)+q(\xi), g^{(p,q)}).\eq 
This completes the proof of Theorem \ref{fgnyt}.
\end{proof}

\begin{rem}\label{mtclz} We note that  by redefining the metric $g^{(a,b)}$ in \eqref{metabx} and letting
\beq\label{metabx1} g^{(a,b)}=\frac{\max\{a,b,1\}dx^2}{(1+b|x|^{2k}+a|\xi|^{2\ell})^{\frac{1}{k}}}+\frac{\max\{a,b,1\}d\xi ^2}{(1+b|x|^{2k}+a|\xi|^{2\ell})^{\frac{1}{\ell}}} \eq 
we only  left with the constants $C_{\alpha\beta}$ in the seminorms and therefore, they are only depending on $\alpha$ and $\beta$. A similar consequence can be deduced for the metric in Stage 3 to obtain structural constants independent of the coefficients in the polynomials, i.e., only dependent on $\alpha$ and $\beta$.
\end{rem}

We now turn to prove that  $g$ is indeed a H\"ormander metric. The lemma  below is useful to study the slowness property. In particular (ii) and (iii) help to reduce a proof of continuity. we refer the reader to (18.4.2), (18.4.2)' of \cite{ho:apde2} for the proof.
\begin{lem}\label{resl} Let $g$ be a Riemannian metric on the phase space. The following statements are equivalent:\\
 
\noindent (i) $g$ is continuous.\\
 
\noindent (ii) There exists a constant $C\geq 1$ such that
\[g_X(X-Y)\leq C^{-1}\,\,\mbox{ implies }\,\, g_Y\leq Cg_X.\]

\noindent (iii) There exists a constant $C\geq 1$ such that
\[g_X(Y)\leq C^{-1}\,\,\mbox{ implies }\,\, g_{X+Y}\leq Cg_X.\]
\end{lem}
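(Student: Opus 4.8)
The plan is to prove the equivalence of the three formulations of continuity for a Riemannian metric $g$ on the phase space by establishing a cycle of implications $(i)\Rightarrow(ii)\Rightarrow(iii)\Rightarrow(i)$. The underlying point is purely elementary: each condition says that if $X$ and $Y$ are close in the $g_X$-sense, then $g_X$ and $g_Y$ are comparable, and the three versions differ only in how ``closeness'' and ``comparability'' are quantified. The definition of $g$-continuity (slowness) in Definition \ref{defmet98} asserts that $g_X(X-Y)\leq C^{-1}$ implies both $g_X(T)/g_Y(T)\leq 1$ and $g_Y(T)/g_X(T)\leq 1$ for all $T$; rewriting the first as a one-sided bound and the second symmetrically is the whole content of $(i)\Leftrightarrow(ii)$, apart from a harmless change of constant.

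For $(i)\Rightarrow(ii)$: assume continuity with constant $C_0$, so $g_X(X-Y)\leq C_0^{-1}$ gives $(g_X(T)/g_Y(T))^{\pm 1}\leq 1$. In particular $g_Y(T)\leq g_X(T)$ for all $T$, which is $g_Y\leq g_X\leq Cg_X$ with $C=\max\{1,C_0\}\geq 1$; so (ii) holds with the same closeness threshold. For $(ii)\Rightarrow(iii)$: this is just the substitution $Y\mapsto X+Y$, i.e. replacing the pair $(X,Y)$ in (ii) by $(X, X+Y)$, since then $g_X(X-(X+Y))=g_X(-Y)=g_X(Y)$ (quadratic forms are even) and $g_{X+Y}\leq Cg_X$ is precisely the conclusion of (iii); the constant is unchanged. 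The step $(iii)\Rightarrow(i)$ requires slightly more care because one must recover the two-sided estimate of the original definition from the one-sided estimate in (iii). Starting from (iii) with constant $C$, set $Y'=Y-X$; if $g_X(X-Y)=g_X(Y')\leq C^{-1}$ then $g_Y=g_{X+Y'}\leq Cg_X$, giving one half. For the other half, I would apply (iii) again with the roles of the base points exchanged: provided $g_Y(X-Y)$ is also small, one gets $g_X\leq Cg_Y$. To control $g_Y(X-Y)$ one uses the bound $g_Y\leq Cg_X$ just obtained, so $g_Y(X-Y)\leq Cg_X(X-Y)\leq CC^{-1}=1$; shrinking the threshold from $C^{-1}$ to $(2C)^{-1}$ or to $(C+C^2)^{-1}$ as needed makes $g_Y(X-Y)\leq C^{-1}$ hold, and then $g_X\leq Cg_Y$ follows. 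Thus $C^{-1}g_X\leq g_Y\leq Cg_X$ on the (possibly smaller) neighbourhood, which after replacing $C$ by a larger constant is exactly the slowness condition $(g_X(T)/g_Y(T))^{\pm1}\leq 1$ up to the usual rescaling of the quadratic forms by a fixed factor.

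The only genuine obstacle is the bootstrapping in $(iii)\Rightarrow(i)$: one must pass from a one-sided inequality at base point $X$ to the symmetric statement, and this necessitates feeding the first bound back in to verify that the hypothesis of (iii) applies with $X$ and $Y$ interchanged. This is a standard trick, and the price is only a fixed adjustment of the continuity constant, which is immaterial since all constants in the definition of a H\"ormander metric are allowed to be structural. I would close by remarking that the statement is symmetric in exactly the way needed for the intended application, namely reducing a proof of continuity of $g^{(p,q)}$ to checking the convenient one-sided form $(iii)$, where only the single weight $p_0+q_0+p(x)+q(\xi)$ and its behaviour under small perturbations of $(x,\xi)$ need to be estimated.
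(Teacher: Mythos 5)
Your proof is correct and follows essentially the same route as the paper: the cycle $(i)\Rightarrow(ii)\Rightarrow(iii)\Rightarrow(i)$, with the substitution $Y\mapsto X+Y$ and evenness of the quadratic form for $(ii)\Rightarrow(iii)$, and the bootstrap (shrink the threshold to roughly $C^{-2}$, feed $g_Y\leq Cg_X$ back in to verify the hypothesis of (iii) at the base point $Y$) for $(iii)\Rightarrow(i)$. The only quibble is that your first suggested threshold $(2C)^{-1}$ does not suffice when $C>2$, but your alternative $(C+C^2)^{-1}$ (or the paper's choice $C^{-2}$) does, so the argument stands.
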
 
 
 We can now consider the metric  $g^{(p,q)}$ in detail.
\begin{thm}\label{contgk} The metric $g^{(p,q)}$ defined by \eqref{anhmetT} is a H\"ormander metric.
\end{thm}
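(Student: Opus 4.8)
The plan is to check, one at a time, the three conditions of Definition \ref{defmet98} for $g=g^{(p,q)}$. Throughout write $\Lambda(X):=p_{0}+q_{0}+p(x)+q(\xi)$, so that after the normalisation $p_{0}\ge 1$ (noted after \eqref{anhmetT}) one has $\Lambda(X)\ge 1$ and $g_{X}=\Lambda(X)^{-1/k}dx^{2}+\Lambda(X)^{-1/\ell}d\xi^{2}$, a symmetrically split metric with $a(X)=\Lambda(X)^{1/k}$, $b(X)=\Lambda(X)^{1/\ell}$. The first step is the observation that $\Lambda$ is comparable to the prototype weight: since $\deg p=2k$, $\deg q=2\ell$ one has $\Lambda(X)\le C(1+|x|^{2k}+|\xi|^{2\ell})$, while the conditions $\liminf_{|x|\to\infty}p(x)/|x|^{2k}>0$ and $\liminf_{|\xi|\to\infty}q(\xi)/|\xi|^{2\ell}>0$, together with $p(x)>-p_{0}$, $q(\xi)>-q_{0}$ and $\Lambda\ge 1$, give the reverse bound $\Lambda(X)\ge c(1+|x|^{2k}+|\xi|^{2\ell})$. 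In particular $g^{(p,q)}$ is equivalent to the prototype metric $g^{(k,\ell)}$ of \eqref{anhmet012}, and since all three conditions of Definition \ref{defmet98} pass to equivalent metrics (with adjusted constants), one is free to replace $\Lambda$ by $1+|x|^{2k}+|\xi|^{2\ell}$ wherever convenient. The uncertainty principle is then immediate from the split-metric formula \eqref{lambda1b}: $\lambda_{g}(X)=\sqrt{a(X)b(X)}=\Lambda(X)^{(k+\ell)/(2k\ell)}\ge 1$, because $\Lambda(X)\ge 1$ and the exponent is positive.

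For slowness, by Lemma \ref{resl} it suffices to find $C\ge 1$ with $g_{X}(X-Y)\le C^{-1}\Rightarrow g_{Y}\le Cg_{X}$, and since $g$ is symmetrically split this reduces to showing $\Lambda$ is slowly varying with respect to $g$, i.e.\ $\Lambda(Y)\asymp\Lambda(X)$ whenever $g_{X}(X-Y)\le c$ for a small $c>0$. Writing $X-Y=(u,v)$, the hypothesis means exactly $|u|\le\sqrt{c}\,\Lambda(X)^{1/(2k)}$ and $|v|\le\sqrt{c}\,\Lambda(X)^{1/(2\ell)}$. Using $\Lambda(X)^{1/(2k)}\le C(1+|x|+|\xi|^{\ell/k})$, $\Lambda(X)^{1/(2\ell)}\le C(1+|x|^{k/\ell}+|\xi|)$ and $\Lambda\asymp 1+|x|^{2k}+|\xi|^{2\ell}$, the triangle inequality gives $|y|^{2k}\le C(|x|^{2k}+c^{k}\Lambda(X))$ and $|\eta|^{2\ell}\le C(|\xi|^{2\ell}+c^{\ell}\Lambda(X))$, hence $\Lambda(Y)\le C(1+c^{\min(k,\ell)})\Lambda(X)\le 2\Lambda(X)$ for $c$ small, with the reverse inequality obtained symmetrically. (Equivalently, one may run a continuation argument on $t\mapsto\Lambda(X+t(X-Y))$ using the first-order derivative bounds $|\nabla_{x}\Lambda|\le C\Lambda^{1-1/(2k)}$, $|\nabla_{\xi}\Lambda|\le C\Lambda^{1-1/(2\ell)}$ supplied by Theorem \ref{fgnyt} applied to $\Lambda=(p_{0}+q_{0})+(p+q)\in S(\Lambda,g)$.) In either case $\Lambda(Y)^{1/k}\asymp\Lambda(X)^{1/k}$ and $\Lambda(Y)^{1/\ell}\asymp\Lambda(X)^{1/\ell}$, so $g_{Y}\le Cg_{X}$.

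For temperateness, the dual of the symmetrically split metric is $g_{Y}^{\sigma}(dx,d\xi)=b(Y)dx^{2}+a(Y)d\xi^{2}=\Lambda(Y)^{1/\ell}dx^{2}+\Lambda(Y)^{1/k}d\xi^{2}$, so $g_{Y}^{\sigma}(X-Y)=\Lambda(Y)^{1/\ell}|u|^{2}+\Lambda(Y)^{1/k}|v|^{2}$ and, since $\Lambda(Y)\ge 1$, $1+g_{Y}^{\sigma}(X-Y)\ge 1+|u|^{2}+|v|^{2}$. Because $g_{X}(T)/g_{Y}(T)$ always lies between the extreme ratios $(\Lambda(Y)/\Lambda(X))^{1/k}$ and $(\Lambda(Y)/\Lambda(X))^{1/\ell}$, it suffices to prove that $\Lambda$ is $g$-temperate, i.e.\ $(\Lambda(X)/\Lambda(Y))^{\pm 1}\le C(1+g_{Y}^{\sigma}(X-Y))^{N}$. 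Using $\Lambda\asymp 1+|x|^{2k}+|\xi|^{2\ell}$ and the elementary inequalities $|x|^{2k}\le C(|y|^{2k}+|u|^{2k})$, $|\xi|^{2\ell}\le C(|\eta|^{2\ell}+|v|^{2\ell})$ one gets
\[\frac{\Lambda(X)}{\Lambda(Y)}\le C\bigl(1+|u|^{2k}+|v|^{2\ell}\bigr)\le C\bigl(1+|u|^{2}+|v|^{2}\bigr)^{\max(k,\ell)}\le C\bigl(1+g_{Y}^{\sigma}(X-Y)\bigr)^{\max(k,\ell)}.\]
The reverse estimate follows by a dichotomy: if $\Lambda(X)\le 2\Lambda(Y)$ then $g_{X}^{\sigma}(X-Y)\le 2\,g_{Y}^{\sigma}(X-Y)$ and the same computation with $X$ and $Y$ interchanged applies, whereas if $\Lambda(X)>2\Lambda(Y)$ then $\Lambda(Y)/\Lambda(X)<1\le(1+g_{Y}^{\sigma}(X-Y))^{\max(k,\ell)}$ trivially. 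Thus \eqref{tempc1x} holds with $J=\max(k,\ell)$, and $g^{(p,q)}$ is a H\"ormander metric.

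The main obstacle is the slowness step. Unlike the $(\rho,\delta)$-metrics, here the weight $\Lambda$ that controls the size of the $g$-balls is itself the object whose variation has to be controlled, so the estimate is self-referential and one must carefully absorb the $\Lambda(X)$ appearing on the right-hand side of the triangle-inequality bounds (equivalently, close the bootstrap). A secondary point needing care is the reverse half of temperateness, where the ratio must be expressed through $g_{Y}^{\sigma}$ rather than $g_{X}^{\sigma}$; the dichotomy above settles this. Everything else is bookkeeping built on Theorem \ref{fgnyt} and the split-metric identities recalled after Definition \ref{defmet98}.
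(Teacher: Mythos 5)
Your proof is correct and follows essentially the same route as the paper: verify the uncertainty principle directly from $\Lambda\ge 1$, reduce slowness via Lemma \ref{resl} to the slow variation of the weight $\Lambda\asymp 1+|x|^{2k}+|\xi|^{2\ell}$, and reduce temperateness to the $g$-temperateness of $\Lambda$ via elementary polynomial inequalities in $|x-y|$, $|\xi-\eta|$ (your version is in fact somewhat more explicit than the paper's, which leaves the final inequalities \eqref{ij6tg2}, \eqref{tempk4d}, \eqref{tempk4a} to the reader). The only cosmetic slip is the claim $\Lambda(Y)\le 2\Lambda(X)$ in the slowness step, where the triangle inequality really yields a constant depending on $k,\ell$; this is harmless since any fixed constant suffices there.
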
 
\begin{proof} By Remark \ref{rem.split.m} we compute
	\[\lambda_{g}(X)=(p_0+q_0+p(x)+q(\xi))^{\frac{k+\ell}{2k\ell}}\,.\]
	
	 As observed previously $p_0$ can be assumed $\geq 1$ obtaining an equivalent metric so that the uncertainty principle holds, i.e., we have $\lambda_{g}(X)\geq 1$ for all $X$.\\

For the proof of the continuity it will be enough to consider the case of the metric $g^{(k,\ell)}$, we will use Lemma \ref{resl}, (ii), 
 and  prove that 
\[\frac{|x-y|^2}{(1+|x|^{2k}+|\xi|^{2\ell})^{\frac{1}{k}}}+\frac{|\xi-\eta|^2}{(1+|x|^{2k}+|\xi|^{2\ell})^{\frac{1}{\ell}}}\leq C^{-1}\implies\]
\begin{multline*}
\frac{|t|^2}{(1+|y|^{2k}+|\eta|^{2\ell})^{\frac{1}{k}}}+\frac{|\tau|^2}{(1+|y|^{2k}+|\eta|^{2\ell})^{\frac{1}{\ell}}} \\
\leq C\left(\frac{|t|^2}{(1+|x|^{2k}+|\xi|^{2\ell})^{\frac{1}{k}}}+\frac{|\tau|^2}{(1+|x|^{2k}+|\xi|^{2\ell})^{\frac{1}{\ell}}}\right),
\end{multline*}
for all $t,\tau\in\ardn$.\\

We observe that the proof of this is reduced to prove that 
\[\frac{|x-y|^2}{(1+|x|^{2k}+|\xi|^{2\ell})^{\frac{1}{k}}}\, ,\,\, \frac{|\xi-\eta|^2}{(1+|x|^{2k}+|\xi|^{2\ell})^{\frac{1}{\ell}}}\leq C^{-1}\implies 
\frac{(1+|x|^{2k}+|\xi|^{2\ell})^{\frac{1}{k}}}{(1+|y|^{2k}+|\eta|^{2\ell})^{\frac{1}{k}}}\leq C .\]
Or even simpler
\beq\frac{|x-y|}{(1+|x|^{k}+|\xi|^{\ell})^{\frac{1}{k}}}\, ,\,\, \frac{|\xi-\eta|}{(1+|x|^{k}+|\xi|^{\ell})^{\frac{1}{\ell}}}\leq C^{-1}\implies 
\frac{1+|x|^{2k}+|\xi|^{2\ell}}{1+|y|^{2k}+|\eta|^{2\ell}}\leq C .\label{lhd1t}\eq

We will assume that the LHS of \eqref{lhd1t} holds for a constant $C>0$ to be chosen later on. Since $|x|-|y|\leq |x-y|$ and $|\xi|-|\eta|\leq |\xi-\eta|$, we have  

\beq |x|\leq  C^{-1}(1+|x|^{k}+|\xi|^{\ell})^{\frac{1}{k}}+|y|,\label{le23}
\eq

\beq |\xi|\leq  C^{-1}(1+|x|^{k}+|\xi|^{\ell})^{\frac{1}{\ell}}+|\eta|.\label{le24}
\eq
By taking powers $2k$ and $2\ell$ on \eqref{le23} and \eqref{le24} respectively, there exists a constant $C_1>1$  only dependent on $k$ and $\ell$ such that
\beq |x|^{2k}\leq  C^{-2k}C_1(1+|x|^{2k}+|\xi|^{2\ell})+C_1|y|^{2k},\label{le231b}
\eq

\beq |\xi|^{2\ell}\leq   C^{-2\ell}C_1(1+|x|^{2k}+|\xi|^{2\ell})+C_1|\eta|^{2\ell}.\label{le241x}
\eq
By adding \eqref{le231b} and \eqref{le241x}, and taking $k_0=\min\{k,\ell\}$ we obtain
\[1+|x|^{2k}+ |\xi|^{2\ell}\leq  2C^{-2k_0}C_1(1+|x|^{2k}+|\xi|^{2\ell})+1+C_1|y|^{2k}+C_1|\eta|^{2\ell}.\]
Hence and since  $C_1>1$ we have
\[(1+|x|^{2k}+ |\xi|^{2\ell})(1- 2C^{-2k_0}C_1)\leq  1+C_1|y|^{2k}+C_1|\eta|^{2\ell}\leq C_1(1+|y|^{2k}+|\eta|^{2\ell}).\]

By choosing $C$ such that $C^{2k_0}>2C_1$ we obtain the desired constant and
\[\frac{1+|x|^{2k}+|\xi|^{2\ell}}{1+|y|^{2k}+|\eta|^{2\ell}}\leq C_1(1- 2C^{-2k_0}C_1)^{-1}.\]
Therefore $g^{(k,\ell)}$ is continuous.\\ 

To see the general case, we note that in the previous proof of continuity for $g^{(k,\ell)}$ if we consider instead $g^{p,q}$, the analysis of the corresponding inequalities is reduced to the terms as for $g^{(k,\ell)}$. This completes the proof of the continuity.\\

We now prove the temperateness. We will only consider the case $g^{(k,\ell)}$. According to \eqref{tempc1x}, we need to prove that there exist $C>0$ and $N\in\ene$ such that
\begin{multline*}
\frac{|t|^2}{(1+|x|^{2k}+|\xi|^{2\ell})^{\frac{1}{k}}}+\frac{|\tau|^2}{(1+|x|^{2k}+|\xi|^{2\ell})^{\frac{1}{\ell}}} \\
\leq
 C\left(\frac{|t|^2}{(1+|y|^{2k}+|\eta|^{2\ell})^{\frac{1}{k}}}+\frac{|\tau|^2}{(1+|y|^{2k}+|\eta|^{2\ell})^{\frac{1}{\ell}}}\right)\times \\
 \times\left(1+(1+|x|^{2k}+|\xi|^{2\ell})^{\frac{1}{\ell}}|x-y|^2+(1+|x|^{2k}+|\xi|^{2\ell})^{\frac{1}{k}}|\xi-\eta|^2\right)^{N},
 \end{multline*}
 and 
 \begin{multline*}
 \frac{|t|^2}{(1+|y|^{2k}+|\eta|^{2\ell})^{\frac{1}{k}}}+\frac{|\tau|^2}{(1+|y|^{2k}+|\eta|^{2\ell})^{\frac{1}{\ell}}} \\ \leq
 C\left(\frac{|t|^2}{(1+|x|^{2k}+|\xi|^{2\ell})^{\frac{1}{k}}}+\frac{|\tau|^2}{(1+|x|^{2k}+|\xi|^{2\ell})^{\frac{1}{\ell}}}\right)\times \\
 \times\left(1+(1+|x|^{2k}+|\xi|^{2\ell})^{\frac{1}{\ell}}|x-y|^2+(1+|x|^{2k}+|\xi|^{2\ell})^{\frac{1}{k}}|\xi-\eta|^2\right)^{N},
  \end{multline*}
 for all $t, \tau\in\arn$.\\
 
We will only prove the first inequality since the other one has a similar argument.\\  
 
We now observe that we can reduce the proof of the first inequality to the following two inequalities:
 \begin{multline*}
\frac{(1+|y|^{2k}+|\eta|^{2\ell})^{\frac{1}{k}}}{(1+|x|^{2k}+|\xi|^{2\ell})^{\frac{1}{k}}} \\ \leq
 C\left(1+(1+|x|^{2k}+|\xi|^{2\ell})^{\frac{1}{\ell}}|x-y|^2+(1+|x|^{2k}+|\xi|^{2\ell})^{\frac{1}{k}}|\xi-\eta|^2\right)^{N},
   \end{multline*}
and
 \begin{multline*}
 \frac{(1+|y|^{2k}+|\eta|^{2\ell})^{\frac{1}{\ell}}}{(1+|x|^{2k}+|\xi|^{2\ell})^{\frac{1}{\ell}}} \\ \leq
 C\left(1+(1+|x|^{2k}+|\xi|^{2\ell})^{\frac{1}{\ell}}|x-y|^2+(1+|x|^{2k}+|\xi|^{2\ell})^{\frac{1}{k}}|\xi-\eta|^2\right)^{N}.
   \end{multline*}
Therefore, it will be enough to prove the first inequality, which again is reduced to
\beq\label{tempk4}\frac{1+|y|+|\eta|^{\frac{\ell}{k}}}{1+|x|+|\xi|^{\frac{\ell}{k}}}\leq
 C\left(1+(1+|x|^{\frac{ k}{\ell}}+|\xi|)|x-y|+(1+|x|+|\xi|^{\frac{\ell}{k}})|\xi-\eta|\right)^{N}.\eq
Now \eqref{tempk4} can be obtained from the inequalities below
\beq\label{tempk4d}\frac{1+|y|}{1+|x|+|\xi|^{\frac{\ell}{k}}}\leq \frac{1+|y|}{1+|x|}\leq
 C\left(1+(1+|x|^{\frac{ k}{\ell}})|x-y|\right)^{N},\eq
 \beq\label{tempk4a}\frac{1+|\eta|^{\frac{\ell}{k}}}{1+|x|+|\xi|^{\frac{\ell}{k}}}\leq\frac{1+|\eta|^{\frac{\ell}{k}}}{1+|\xi|^{\frac{\ell}{k}}}\leq
 C\left(1+(1+|\xi|^{\frac{\ell}{k}})|\xi-\eta|\right)^{N},\eq
which can be easily verified.
\end{proof}
In general one can assume that the uncertainty parameter $\lambda_g$ of a H\"ormander metric $g$ is a $g$-weight as noted in Remark \ref{se2a}. However here one can deduce it directly. Indeed, the proof of Theorem \ref{contgk} has some immediate consequences. In particular the validity of \eqref{lhd1t} implies that $m=p_0+q_0+p(x)+q(\xi)$ is  $g$-continuous. Similarly, we can obtain the temperateness of $p_0+q_0+p(x)+q(\xi)$ with respect to $g$ from the proof above for the temperateness of $g$. Therefore $p_0+q_0+p(x)+q(\xi)$ is a $g$-weight. Summarising we have:
\begin{prop} Let $g=g^{(p,q)}$ be the metric defined  by \eqref{anhmetT}. Then $p_0+q_0+p(x)+q(\xi)$ is a $g$-weight.
\end{prop}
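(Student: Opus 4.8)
The plan is to verify directly the two defining conditions of a $g$-weight for $m(X):=p_0+q_0+p(x)+q(\xi)$, namely $g$-continuity and $g$-temperateness, recycling the inequalities already extracted in the proof of Theorem \ref{contgk}. As in that proof, I would first reduce to the prototype metric $g=g^{(k,\ell)}$ of \eqref{anhmet012}: by the rescaling and structural-constant bookkeeping carried out in Stages 2 and 3 of the proof of Theorem \ref{fgnyt}, together with the equivalence of metrics noted after \eqref{anhmetT}, it suffices to treat $m$ equivalent to $1+|x|^{2k}+|\xi|^{2\ell}$ against $g^{(k,\ell)}$.

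For $g$-continuity, I would observe the elementary comparison
\[
\frac{1+|x|^{2k}+|\xi|^{2\ell}}{1+|y|^{2k}+|\eta|^{2\ell}}\;\le\; C\left(\frac{1+|x|^2}{1+|y|^2}\right)^{k}+C\left(\frac{1+|\xi|^{2\ell/k}}{1+|\eta|^{2\ell/k}}\right)^{k},
\]
and the symmetric one with $X$ and $Y$ interchanged, so that the implication \eqref{ij6t} established inside the proof of Theorem \ref{contgk} yields at once $(m(X)/m(Y))^{\pm1}\le\tilde C$ whenever $g_X(X-Y)\le\tilde C^{-1}$ (note that the hypothesis $g_X(X-Y)\le\tilde C^{-1}$, in the equivalent form coming from \eqref{mety7}, controls each of the two summands appearing in \eqref{ij6t}). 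Thus $g$-continuity of $m$ is a byproduct of the slowness argument and needs nothing new.

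For $g$-temperateness, I would use the dual-metric formula $g^{\sigma}$ for a symmetrically split metric recalled after Definition \ref{defmet98} to write
\[
g_Y^{\sigma}(X-Y)=(1+|y|^{2k}+|\eta|^{2\ell})^{1/\ell}|x-y|^2+(1+|y|^{2k}+|\eta|^{2\ell})^{1/k}|\xi-\eta|^2,
\]
and then reduce, by raising to the powers $k$ and $\ell$ and summing the $x$- and $\xi$-contributions with a crude binomial bound, the required estimates $(m(X)/m(Y))^{\pm1}\le\tilde C(1+g_Y^{\sigma}(X-Y))^N$ to the one-variable inequalities \eqref{tempk4d}--\eqref{tempk4a} already verified in Theorem \ref{contgk}. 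The only point needing a word is that those inequalities were phrased with the weight evaluated at $X$ inside $g^{\sigma}$ rather than at $Y$; but the temperateness of $g$ itself, proved in Theorem \ref{contgk}, makes $g_X^{\sigma}(X-Y)$ and $g_Y^{\sigma}(X-Y)$ comparable up to a factor $(1+g_Y^{\sigma}(X-Y))^{J}$ via \eqref{tempc1x}, so the two formulations are equivalent after enlarging $N$. Combining the two parts, $m$ is $g$-continuous and $g$-temperate, hence a $g$-weight.

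The main obstacle, such as it is, is purely organisational: carefully checking that the reduction from the general pair $q\in\mathcal{P}_{2\ell}$, $p\in\mathcal{P}_{2k}$ and the metric $g^{(p,q)}$ to the model data $|x|^{2k}+|\xi|^{2\ell}$, $g^{(k,\ell)}$ preserves all the inequalities invoked; this is exactly the rescaling argument of Stages 2--3 of the proof of Theorem \ref{fgnyt}, now applied to $m$ itself rather than to its derivatives. No genuinely new estimate is required beyond what already appears in the proofs of Theorems \ref{fgnyt} and \ref{contgk}.
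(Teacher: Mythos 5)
Your argument is correct and is essentially the paper's own: the proposition is deduced there precisely by observing that the implication \eqref{ij6t} from the slowness part of the proof of Theorem \ref{contgk} gives the $g$-continuity of $p_0+q_0+p(x)+q(\xi)$, while the inequalities \eqref{tempk4d}--\eqref{tempk4a} from the temperateness part give its $g$-temperateness. The additional bookkeeping you supply (reduction to the model data $1+|x|^{2k}+|\xi|^{2\ell}$ with $g^{(k,\ell)}$, and the comparability of $g_X^{\sigma}(X-Y)$ and $g_Y^{\sigma}(X-Y)$ via \eqref{tempc1x}) is a sound elaboration of the same route.
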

\begin{rem}\label{repkl}
Now, from the construction of the metric $g=g^{(p,q)}$  we know that $$p(x)+q(\xi)\in S(p_0+q_0+p(x)+q(\xi),g).$$ On the other hand, 
since $\lambda_g(x,\xi)=(p_0+q_0+p(x)+q(\xi))^{\frac{k+\ell}{2k\ell}}$ we note that 
 $$p(x)+q(\xi)\in S(\lambda_g^{\frac{2k\ell}{k+\ell}},g),$$
 and therefore $p(x)+q(\xi)$ is a symbol of order $\frac{2k\ell}{k+\ell}$
  with respect to $g$. \\

In particular, for the quartic oscillator we have
\[|x|^4+|\xi|^2\in S(\lambda_g^{\frac 43},g^{(1,2)}),\]
i.e. the quartic oscillator is of order $\frac 43$ with respect to $g^{(1,2)}$. 
\end{rem}

 \begin{rem} Regarding the metric $g^{(k,\ell)}$ defined in  \eqref{anhmet012}, we have the following properties: \\
(i) Given $k_1,\ell_1, k_2, \ell_2\geq 1$, with 
either $k_1\neq k_2$ or $\ell_1\neq\ell_2$,  the metrics  $g^{(k_1,\ell_1)}$ and $g^{(k_2,\ell_2)}$ can not be compared, i.e., 
 none of the inequalities $g^{(k_1,\ell_1)}\leq g^{(k_2,\ell_2)}$,  $g^{(k_2,\ell_2)}\leq g^{(k_1,\ell_1)}$ hold. \\
(ii) On the other hand, we can compare any of the metrics $g^{(k,\ell)}$ with the metric $g^{1,0}$ in the sense of $(\rho, \delta)$ classes. Indeed, it is   clear  that 
\[g^{(k,\ell)}\leq g^{1,0},\]
 for all $k, \ell \geq 1$.
\end{rem}
As a consequence of the Theorem \ref{fgnyt} we have the following property for the membership of more general polynomials in classes determined by the metric $g$ defined in \eqref{anhmetT}:
\begin{cor} Let  $g=g^{(p,q)}$ be the metric defined  by \eqref{anhmetT}. 
If $p_1:\arn\rightarrow\ce$ is a polynomial of order $\leq 2k$ and $q_1:\arn\rightarrow \ce$ is a polynomial of order $\leq 2\ell$, then
\[p_1(x)+q_1(\xi)\in S(\lambda_{g}^{\frac{2k\ell}{k+\ell}},g).\]
\end{cor}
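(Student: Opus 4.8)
The plan is to reduce the statement to what has already been proved in Theorem~\ref{fgnyt}, namely that $p(x)+q(\xi)\in S(m,g)$ with $m=p_0+q_0+p(x)+q(\xi)$, together with the identity $m=\lambda_g^{\frac{2k\ell}{k+\ell}}$ observed in Remark~\ref{repkl}. Since the target class $S(\lambda_g^{\frac{2k\ell}{k+\ell}},g)=S(m,g)$ is a vector space, it suffices to treat monomials: writing $p_1(x)=\sum_{|\beta|\leq 2k}c_\beta x^\beta$ and $q_1(\xi)=\sum_{|\alpha|\leq 2\ell}d_\alpha\xi^\alpha$, it is enough to show that each monomial $x^\beta$ with $|\beta|\leq 2k$ and each $\xi^\alpha$ with $|\alpha|\leq 2\ell$ lies in $S(m,g)$, and then sum. (Complex coefficients cause no trouble since $S(M,g)$ is a complex vector space and the seminorm estimates \eqref{inwhk} are insensitive to constant multiples.)

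First I would record the pointwise size estimates underlying everything. By Young's inequality (the Lemma in the proof of Theorem~\ref{fgnyt}), for $|\beta|\leq 2k$ one has $|x^\beta|\leq |x|^{|\beta|}\leq C(1+|x|^{2k})\leq C m(x,\xi)$, and likewise $|\xi^\alpha|\leq C(1+|\xi|^{2\ell})\leq C m(x,\xi)$ for $|\alpha|\leq 2\ell$; so the zeroth-order bound $M(X)$ in \eqref{inwhk} is in place. For the derivatives, differentiating $x^\beta$ a total of $|\gamma|$ times in $x$ (and any number of times in $\xi$, which kills it) produces, up to a combinatorial constant, a monomial of degree $|\beta|-|\gamma|$, and one needs
\[
|x|^{|\beta|-|\gamma|}\leq C\, m(x,\xi)\,\big(1+|x|^{2k}+|\xi|^{2\ell}\big)^{-\frac{|\gamma|}{2k}},
\]
which after absorbing $m$ reduces to $|x|^{|\beta|-|\gamma|}(1+|x|^{2k}+|\xi|^{2\ell})^{\frac{|\gamma|}{2k}}\leq C(1+|x|^{2k}+|\xi|^{2\ell})$; since $|\beta|\leq 2k$, this follows from exactly the same two-condition argument (splitting the product with a conjugate pair of exponents) used for \eqref{eqwiu}--\eqref{eqwiu32} in Stage~1, now with $2\ell-j$ replaced by $|\beta|-|\gamma|$ and the constraint $|\beta|\leq 2k$ playing the role previously played by $j\leq 2\ell$. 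The symmetric computation handles $\xi^\alpha$ with the roles of $k$ and $\ell$ exchanged. Translating these bounds into the intrinsic form \eqref{inwhk} is immediate because $g^{(k,\ell)}$ is symmetrically split: the coefficient of $dx^2$ is $(1+|x|^{2k}+|\xi|^{2\ell})^{-1/k}$, so each $x$-derivative costs a factor $(1+|x|^{2k}+|\xi|^{2\ell})^{-1/(2k)}$ when measured against $g_X^{1/2}$, exactly matching the exponent above; the general metric $g^{(p,q)}$ is equivalent to $g^{(k,\ell)}$ (with $m$ equivalent to $1+|x|^{2k}+|\xi|^{2\ell}$) by Stages 2--3 of Theorem~\ref{fgnyt}, so the conclusion transfers verbatim.

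The only genuine point requiring care — the ``main obstacle'' — is the mixed interaction in the derivative estimate: when one differentiates $x^\beta$ fewer than $|\beta|$ times, the surviving factor $|x|^{|\beta|-|\gamma|}$ must be dominated by $m$ even in the region where $|\xi|$ is large and $|x|$ is moderate, and conversely the weight $(1+|x|^{2k}+|\xi|^{2\ell})^{|\gamma|/(2k)}$ can be large precisely there; reconciling the two is what forces the Young-inequality splitting with the specific exponents $p,q$ chosen so that $jsp\le 2k$ and $(2\ell-j)q\le 2\ell$ in Stage~1, and one must check that the analogous inequalities $|\gamma| s p \le 2k$, $(|\beta|-|\gamma|)q \le 2k$ are still solvable under $|\beta|\le 2k$ — which they are, by the same bookkeeping. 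Once this is verified for the extreme case $|\beta|=2k$ (and symmetrically $|\alpha|=2\ell$), every lower-degree monomial is a fortiori covered, summing over the finitely many monomials gives $p_1(x)+q_1(\xi)\in S(m,g)=S(\lambda_g^{\frac{2k\ell}{k+\ell}},g)$, and the corollary is proved.
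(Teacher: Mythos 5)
Your proof is correct and follows essentially the same route as the paper, which states this corollary without a separate proof precisely because the derivative estimates in Stages 1--3 of Theorem \ref{fgnyt} only use upper bounds of the form $|p(x)|\leq b|x|^{2k}+p_0$, $|q(\xi)|\leq a|\xi|^{2\ell}+q_0$ and thus apply verbatim to arbitrary polynomials of order $\leq 2k$ and $\leq 2\ell$, combined with the identification $\lambda_g^{\frac{2k\ell}{k+\ell}}=p_0+q_0+p(x)+q(\xi)$ from Remark \ref{repkl}. Your reduction to monomials and the check that $\frac{|\beta|-|\gamma|}{2k}+\frac{|\gamma|}{2k}\leq 1$ keeps the Young-inequality exponents admissible is exactly the required bookkeeping (indeed, since $x^\beta$ is independent of $\xi$, the bound $|x|^{|\beta|-|\gamma|}\leq C(1+|x|^{2k}+|\xi|^{2\ell})^{1-\frac{|\gamma|}{2k}}$ even follows directly by monotonicity of the weight).
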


\section{Some notes on the classes $S(\lambda_g^{m},g)$}
\label{SEC:anharmonic-notes}

We now make some observations and deduce some consequences formulating a setting in a more intrinsic way. 

Let $g=g^{(p,q)}$ be defined by \eqref{anhmetT} with  $k, \ell$ integers $\geq 1$. Now that we know that $g$ is a H\"ormander metric, therefore there is a corresponding pseudo-differential calculus. We can define the class $S(\lambda_g^{m},g)$ for $m\in\ar$ in an equivalent way without referring  explicitly to the metric $g$. Indeed, we can define them in the following way:
\begin{defn}\label{DEF:sigmas} 
Let $m\in\ar$ and  let $k, \ell$ be integers $\geq 1$. If $a\in C^{\infty}(\ardn)$ we will say that $a\in\Sigma_{k,\ell}^m$ if 
 \beq\label{sigmacl}|\partial_{x}^{\beta}\partial_{\xi}^{\alpha}a(x,\xi)|\leq C_{\alpha\beta}(1+|x|^{k}+|\xi|^{\ell})^{m-\frac{|\beta|}{k}-\frac{|\alpha|}{\ell}}\,,\eq
 holds for all multi-indices $\alpha,\beta$, and all $x,\xi\in\mathbb R^n$.
\end{defn}
The definition above is related to the corresponding one for the metric $g=g^{(k,\ell)}$ defined by \eqref{anhmet012}, i.e., the one associated to $p(x)=|x|^{2k}, q(\xi)=|\xi|^{2\ell}$. Indeed we have:
\begin{prop}\label{sht57} Let $m\in\ar$ and let $g=g^{(k,\ell)}$ be defined by \eqref{anhmet012} with  $k, \ell$ integers $\geq 1$. Then 
\[\Sigma_{k,\ell}^m=S(\lambda_g^{m(\frac{k\ell}{k+\ell})},g).\]
\end{prop}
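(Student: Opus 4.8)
To prove $\Sigma_{k,\ell}^m = S(\lambda_g^{m},g)$ with $g = g^{(k,\ell)}$ given by \eqref{anhmet012}, the strategy is to translate the coordinate-free seminorm estimate \eqref{inwhk} defining $S(\lambda_g^m,g)$ into the concrete multi-index estimate \eqref{sigmacl} defining $\Sigma_{k,\ell}^m$, and conversely. The key elementary facts I would record first are: (a) the weight identity $\lambda_g(x,\xi) = (1+|x|^{2k}+|\xi|^{2\ell})^{\frac{k+\ell}{2k\ell}} = \Lambda(x,\xi)^{\frac{k+\ell}{k\ell}}$, which follows from \eqref{lambda1b} applied to the symmetrically split metric \eqref{anhmet012} (where $a(X)=(1+|x|^{2k}+|\xi|^{2\ell})^{-1/k}$, $b(X)=(1+|x|^{2k}+|\xi|^{2\ell})^{-1/\ell}$), so that $\lambda_g^m = \Lambda^{\frac{(k+\ell)m}{k\ell}}$, matching exactly the exponent of $\Lambda$ appearing on the right of \eqref{sigmacl}; and (b) the explicit form of $g_X^{1/2}$: for a vector $T = (t,\tau) \in \ardn$ one has $g_X^{1/2}(T) = \left( \frac{|t|^2}{\Lambda^{2/k}} + \frac{|\tau|^2}{\Lambda^{2/\ell}} \right)^{1/2}$, which is comparable (up to a factor $\sqrt2$) to $\frac{|t|}{\Lambda^{1/k}} + \frac{|\tau|}{\Lambda^{1/\ell}}$.

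With these in hand, the direction $S(\lambda_g^m,g) \subset \Sigma_{k,\ell}^m$ is obtained by choosing, in the $k$-th derivative estimate \eqref{inwhk}, the test directions $T_i$ among the coordinate unit vectors: for a mixed derivative $\partial_x^\beta \partial_\xi^\alpha a$ with $|\beta|+|\alpha| = N$, take $|\beta|$ of the $T_i$ equal to unit vectors $e_{x_j}$ (for which $g_X^{1/2}(e_{x_j}) = \Lambda^{-1/k}$) and $|\alpha|$ of them equal to unit vectors $e_{\xi_j}$ (for which $g_X^{1/2}(e_{\xi_j}) = \Lambda^{-1/\ell}$); the product $\prod_i g_X^{1/2}(T_i)$ then equals $\Lambda^{-|\beta|/k - |\alpha|/\ell}$, and combined with the weight $\lambda_g^m = \Lambda^{\frac{(k+\ell)m}{k\ell}}$ this yields precisely \eqref{sigmacl}. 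The reverse inclusion $\Sigma_{k,\ell}^m \subset S(\lambda_g^m,g)$ is the multilinearity step: the $N$-th derivative $a^{(N)}(X;T_1,\dots,T_N)$ expands as $\sum_{\beta,\alpha} \binom{\cdots}{\cdots} \partial_x^\beta\partial_\xi^\alpha a(X)\, \prod (t_i)_{\cdots}\prod(\tau_i)_{\cdots}$ over the ways of distributing the $N$ slots among $x$- and $\xi$-directions; bounding each $|\partial_x^\beta\partial_\xi^\alpha a|$ by \eqref{sigmacl} and each scalar component $|(t_i)_j| \le |t_i|$, $|(\tau_i)_j| \le |\tau_i|$, and regrouping, one bounds the sum by $C_N\, \Lambda^{\frac{(k+\ell)m}{k\ell}} \prod_{i=1}^N \left( \frac{|t_i|}{\Lambda^{1/k}} + \frac{|\tau_i|}{\Lambda^{1/\ell}} \right) \le C_N'\, \lambda_g^m(X) \prod_i g_X^{1/2}(T_i)$, which is \eqref{inwhk}.

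**Main obstacle.**
The only genuine subtlety — and where I would spend care — is the bookkeeping in the multilinear expansion for the reverse inclusion: one must check that the combinatorial factors from the Leibniz-type expansion of $a^{(N)}$ in terms of partial derivatives are absorbed into the constant $C_N$ (harmless, since they depend only on $N$), and that every monomial $\prod(t_i)_{j_i}\prod(\tau_{i'})_{j_{i'}}$ arising from a fixed split $(\beta,\alpha)$ is controlled by the corresponding term $\Lambda^{-|\beta|/k-|\alpha|/\ell}$ times $\prod g_X^{1/2}(T_i)$ after using $\frac{|t_i|}{\Lambda^{1/k}} \le g_X^{1/2}(T_i)$ and $\frac{|\tau_i|}{\Lambda^{1/\ell}} \le g_X^{1/2}(T_i)$ termwise. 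Everything else is bounded arithmetic on the exponents, governed entirely by the identity $\lambda_g = \Lambda^{\frac{k+\ell}{k\ell}}$. I would close by remarking that, since the general metric $g^{(p,q)}$ is equivalent to $g^{(k,\ell)}$ (the coefficients $p_0+q_0+p(x)+q(\xi)$ and $1+|x|^{2k}+|\xi|^{2\ell}$ being comparable, as established in the proof of Theorem \ref{fgnyt}), the same identification $S(\lambda_g^m,g^{(p,q)}) = \Sigma_{k,\ell}^m$ holds for the full family, which is the content one actually wants for the subsequent sections.
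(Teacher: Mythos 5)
Your proof is correct and is exactly the verification the paper has in mind: the authors state Proposition \ref{sht57} without proof (introducing it with ``It is clear that\dots''), and your two steps --- testing \eqref{inwhk} on coordinate unit vectors to obtain \eqref{sigmacl}, and the multilinear expansion together with $|t_i|\Lambda^{-1/k}\le g_X^{1/2}(T_i)$ and $|\tau_i|\Lambda^{-1/\ell}\le g_X^{1/2}(T_i)$ for the converse --- supply precisely the omitted details, with the correct bookkeeping of the exponents via $\lambda_g=\Lambda^{(k+\ell)/(k\ell)}$. One cosmetic slip: in the convention of \eqref{lambda1b} the quantities $a(X)$ and $b(X)$ are the denominators of the metric, so $a(X)=(1+|x|^{2k}+|\xi|^{2\ell})^{1/k}$ and $b(X)=(1+|x|^{2k}+|\xi|^{2\ell})^{1/\ell}$ with positive exponents (your parenthetical negates them), although the resulting identity $\lambda_g=\sqrt{a(X)b(X)}=\Lambda^{(k+\ell)/(k\ell)}$ that you actually use is the correct one.
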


\begin{proof}
	Recall that for $g=g^{(k,\ell)}$, by Theorem \ref{contgk} we have  $$1+|x|^{k}+|\xi|^{\ell}\sim \lambda_{g}^{\frac{k\ell}{k+\ell}}.$$ Therefore, by \eqref{sigmacl}, we have $$S(\lambda_g^{m(\frac{k\ell}{k+\ell})},g)\subset \Sigma_{k,\ell}^{m}.$$ 
	On the other hand, if $a \in C^{\infty}(\arn \times \arn) \in S(\lambda_g^{m(\frac{k\ell}{k+\ell})},g)$, then by Definition \ref{inwhk} we have
	\[
	|\partial_{x}^{\beta}\partial_{\xi}^{\alpha}a(x,\xi)|\leq C_{\alpha\beta}(1+|x|^{2k}+|\xi|^{2\ell})^{\frac{m}{2}-\frac{|\beta|}{2k}-\frac{|\alpha|}{2\ell}}\,,
	\] 
	i.e., we get the reverse inclusion $S(\lambda_g^{m(\frac{k\ell}{k+\ell})},g) \subset \Sigma_{k,\ell}^{m}$, and this completes the proof.
\end{proof}
This observation will be helpful to undertake some investigations on anharmonic oscillators in a simplified way without an explicit reference to
the $S(M,g)$ setting.  

By associating to a symbol $a\in\Sigma_{k,\ell}^m$ a pseudodifferential operator $a(x,D)$ we  dispose of a pseudodifferential calculus on $\Sigma_{k,\ell}^m$ , inherited from the $S(M,g)$ calculus.\\

We also observe that actually Proposition \ref{sht57} is also valid if instead we use a metric $g^{(p,q)}$ with  polynomials 
 $p\in\mathcal{P}_{2k}, q\in \mathcal{P}_{2\ell}$. Moreover we have:
\begin{cor} Let $m\in\ar$ and let $k, \ell$ be integers $\geq 1$. For any $p\in\mathcal{P}_{2k}, q\in \mathcal{P}_{2\ell}$, the  
 classes $S(\lambda_{g^{(p,q)}}^{m\left(\frac{k\ell}{k+\ell} \right)}, g^{(p,q)})$ all coincide and are equal to $\Sigma_{k,\ell}^m$.
\end{cor}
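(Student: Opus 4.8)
The plan is to deduce this corollary from Proposition \ref{sht57}. That proposition already identifies $\Sigma_{k,\ell}^m$ with $S(\lambda_g^m,g)$ for $g=g^{(k,\ell)}$, i.e. for the model polynomials $p(x)=|x|^{2k}$ and $q(\xi)=|\xi|^{2\ell}$; so it suffices to prove that for \emph{any} $p\in\mathcal{P}_{2k}$ and $q\in\mathcal{P}_{2\ell}$ the pair $(\lambda_{g^{(p,q)}}^m, g^{(p,q)})$ is \emph{equivalent} to the model pair $(\lambda_{g^{(k,\ell)}}^m, g^{(k,\ell)})$ — meaning there is a constant $c\geq 1$, independent of $X=(x,\xi)\in\ardn$, with $c^{-1}g^{(k,\ell)}_X\leq g^{(p,q)}_X\leq c\,g^{(k,\ell)}_X$ as quadratic forms and $c^{-1}\lambda_{g^{(k,\ell)}}^m\leq\lambda_{g^{(p,q)}}^m\leq c\,\lambda_{g^{(k,\ell)}}^m$. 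Once this is in hand, the defining inequalities \eqref{inwhk} for the two symbol classes differ only by a bounded multiplicative factor, which can be absorbed into the seminorm constants $C_k$; hence $S(\lambda_{g^{(p,q)}}^m, g^{(p,q)})=S(\lambda_{g^{(k,\ell)}}^m, g^{(k,\ell)})=\Sigma_{k,\ell}^m$, and in particular the left-hand side is independent of $p$ and $q$.

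The main work is the comparison of the weight $m_{p,q}(x,\xi):=p_0+q_0+p(x)+q(\xi)$ with $\Lambda(x,\xi)^2=1+|x|^{2k}+|\xi|^{2\ell}$. First I would record the elementary two-sided estimate $p_0+p(x)\asymp 1+|x|^{2k}$ on all of $\arn$: the upper bound $p(x)+p_0\leq C(1+|x|^{2k})$ holds because $p$ is a polynomial of degree $2k$; for the lower bound, the hypothesis $\liminf_{|x|\to\infty}p(x)/|x|^{2k}>0$ provides $c_1,R>0$ with $p(x)\geq c_1|x|^{2k}$ for $|x|\geq R$, while on the compact ball $\{|x|\leq R\}$ the continuous function $p(x)+p_0$ is bounded below by a positive constant because $p(x)+p_0>0$ everywhere. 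The same reasoning gives $q_0+q(\xi)\asymp 1+|\xi|^{2\ell}$, and adding yields $m_{p,q}(x,\xi)\asymp \Lambda(x,\xi)^2$ with constants depending only on the coefficients of $p$ and $q$. Raising this to the power $\tfrac{k+\ell}{2k\ell}$ gives $\lambda_{g^{(p,q)}}=m_{p,q}^{(k+\ell)/(2k\ell)}\asymp\Lambda^{(k+\ell)/(k\ell)}=\lambda_{g^{(k,\ell)}}$, whence also $\lambda_{g^{(p,q)}}^m\asymp\lambda_{g^{(k,\ell)}}^m$; raising it instead to the powers $-\tfrac1k$ and $-\tfrac1\ell$ gives the coefficientwise comparison of the metrics defined by \eqref{anhmetT} and \eqref{anhmet012}.

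I do not expect a genuine obstacle here. The only point requiring a little care is the lower bound in $p_0+p(x)\asymp 1+|x|^{2k}$, which must be valid on the whole of $\arn$: at infinity it is exactly the membership condition defining $\mathcal{P}_{2k}$, and near the origin it is a compactness argument using the strict positivity $p(x)+p_0>0$. Everything else is the trivial observation that the class $S(M,g)$, as defined through \eqref{inwhk}, is unchanged if $g$ is replaced by a uniformly equivalent metric and $M$ by a uniformly equivalent weight. Chaining these facts with Proposition \ref{sht57} then gives at once that all the classes $S(\lambda_{g^{(p,q)}}^m, g^{(p,q)})$ coincide and equal $\Sigma_{k,\ell}^m$.
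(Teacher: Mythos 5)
Your argument is correct and is essentially the route the paper intends: the paper states this corollary without proof as an observation following Proposition \ref{sht57}, relying implicitly on the very equivalence $p_0+q_0+p(x)+q(\xi)\simeq 1+|x|^{2k}+|\xi|^{2\ell}$ that you prove (and which the paper invokes explicitly later, in the proof of Theorem \ref{sch1mf}). Your filling-in of the lower bound via the $\liminf$ condition at infinity plus compactness and strict positivity near the origin, followed by the remark that $S(M,g)$ is stable under uniformly equivalent metrics and weights, is exactly the right justification.
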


We now formulate some few consequences for the classes $\Sigma_{k,\ell}^m$. In particular, for the composition formula we have:
\begin{thm}\label{thm.comp} Let $m_1, m_2\in\ar$ and  $k, \ell$ be integers $\geq 1$. If $a\in\Sigma_{k,\ell}^{m_1}, b\in\Sigma_{k,\ell}^{m_2}$, there exists $c\in \Sigma_{k,\ell}^{m_1+m_2}$ such that $a(x,D)\circ b(x,D)=c(x,D)$ and
\[c(x,\xi)\sim \sum\limits_{\alpha}(2\pi i)^{-|\alpha|}\partial_{\xi}^{\alpha}a(x,\xi)\partial_{x}^{\alpha}b(x,\xi),\]
i.e. for all $N\in\ene$
\[c(x,\xi)- \sum\limits_{|\alpha|<N}(2\pi i)^{-|\alpha|}\partial_{\xi}^{\alpha}a(x,\xi)\partial_{x}^{\alpha}b(x,\xi)\in \Sigma_{k,\ell}^{m_1+m_2-N\left(\frac{k+\ell}{k\ell}\right)} .\]
\end{thm}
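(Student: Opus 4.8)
The plan is to reduce the composition formula for $\Sigma_{k,\ell}^{m}$ to the known asymptotic composition formula for the Weyl–H\"ormander calculus $S(M,g)$ associated to the metric $g=g^{(k,\ell)}$ of \eqref{anhmet012}, using the identification $\Sigma_{k,\ell}^{m}=S(\lambda_g^{m},g)$ from Proposition \ref{sht57}. First I would recall that, since $g$ is a H\"ormander metric (Theorem \ref{contgk}) and $\lambda_g^{m_1}, \lambda_g^{m_2}$ are $g$-weights, the standard calculus gives $c\in S(\lambda_g^{m_1+m_2},g)$ with $a(x,D)\circ b(x,D)=c(x,D)$ (here one must first pass between the Kohn–Nirenberg and Weyl quantizations, or equivalently invoke the Kohn–Nirenberg composition theorem for $S(M,g)$ classes directly; the Weyl composition $a\#b$ of the preliminaries has its own asymptotic expansion in powers of the symplectic form, and the two differ only in the combinatorics of the expansion). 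The asymptotic expansion then reads $c\sim\sum_\alpha (2\pi i)^{-|\alpha|}\partial_\xi^\alpha a\,\partial_x^\alpha b$, with the remainder after $N$ terms lying in the class obtained by gaining $N$ factors of the Planck function $h_g=\lambda_g^{-1}$, i.e. in $S(\lambda_g^{m_1+m_2}h_g^{N},g)=S(\lambda_g^{m_1+m_2-N},g)=\Sigma_{k,\ell}^{m_1+m_2-N}$.

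The key steps, in order: (1) State that each term $\partial_\xi^\alpha a\,\partial_x^\alpha b$ of the expansion belongs to $\Sigma_{k,\ell}^{m_1+m_2-|\alpha|\frac{2k\ell}{k+\ell}\cdot\frac{1}{?}}$ — more precisely, differentiation in $\xi$ costs $|\alpha|/\ell$ powers of $\Lambda$ and differentiation in $x$ costs $|\alpha|/k$ powers, so by \eqref{sigmacl} the product $\partial_\xi^\alpha a\,\partial_x^\alpha b$ satisfies the estimate defining $\Sigma_{k,\ell}^{m_1+m_2}$ with an extra decay $\Lambda^{-|\alpha|(1/k+1/\ell)}=\Lambda^{-|\alpha|\frac{k+\ell}{k\ell}}$, which is exactly the gain of $|\alpha|$ units of order since order-$1$ corresponds to $\Lambda^{\frac{k+\ell}{k\ell}}$; hence $\partial_\xi^\alpha a\,\partial_x^\alpha b\in\Sigma_{k,\ell}^{m_1+m_2-|\alpha|}$. (2) Invoke the general $S(M,g)$ composition theorem (e.g. from \cite{ho:apde2}, \cite{le:book}) to get the existence of $c$ and the remainder estimate in $S(\lambda_g^{m_1+m_2-N},g)$. (3) Translate everything back through Proposition \ref{sht57} and the remark that the classes $S(\lambda_{g^{(p,q)}}^m,g^{(p,q)})$ coincide with $\Sigma_{k,\ell}^m$, to phrase the conclusion purely in terms of $\Sigma_{k,\ell}^m$. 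One should also check that the leading term $ab$ indeed lies in $\Sigma_{k,\ell}^{m_1+m_2}$, which is immediate from the Leibniz rule applied to \eqref{sigmacl}.

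The main obstacle is matching the abstract $S(M,g)$ asymptotic composition formula — which is naturally phrased with derivatives along $g$-unit vectors and with remainders controlled by powers of $h_g$ — with the concrete coordinate form $\sum_\alpha(2\pi i)^{-|\alpha|}\partial_\xi^\alpha a\,\partial_x^\alpha b$ stated here. Concretely one must verify that for the symmetrically split metric $g^{(k,\ell)}$ the Planck function satisfies $h_g(x,\xi)=\lambda_g(x,\xi)^{-1}=(1+|x|^{2k}+|\xi|^{2\ell})^{-\frac{k+\ell}{2k\ell}}=\Lambda^{-\frac{k+\ell}{k\ell}}$ (using \eqref{lambda1b} with $a(X)=\Lambda^{-2/k}$, $b(X)=\Lambda^{-2/\ell}$), so that $h_g^N$ is precisely the weight factor turning $\Sigma_{k,\ell}^{m_1+m_2}$ into $\Sigma_{k,\ell}^{m_1+m_2-N}$; and that the off-diagonal "mixed" terms that a priori appear in the $g$-adapted expansion vanish because $g^{(k,\ell)}$ is split, leaving only the $\partial_\xi^\alpha\otimes\partial_x^\alpha$ pairing. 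Once this bookkeeping is in place, the statement follows with no further analytic input.
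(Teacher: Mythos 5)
Your proposal is correct and follows the same route the paper takes: the paper states this theorem without proof, as an immediate consequence of the identification $\Sigma_{k,\ell}^m=S(\lambda_g^m,g)$ from Proposition \ref{sht57} together with the composition calculus inherited from the general $S(M,g)$ framework, which is exactly your reduction. Your order-counting for the terms $\partial_\xi^\alpha a\,\partial_x^\alpha b$ and the computation $h_g=\Lambda^{-\frac{k+\ell}{k\ell}}$ are the right bookkeeping (note only that in the notation of \eqref{lambda1b} the denominators give $a(X)=\Lambda^{2/k}$, $b(X)=\Lambda^{2/\ell}$, not their reciprocals, though your conclusion is unaffected).
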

\begin{rem} Recall that by the composition Theorem \ref{thm.com.} and Proposition \ref{sht57}, the reminder term in the above composition formula lies in the class
\[
S(\lambda_{g}^{(m_1+m_2)\left(\frac{k\ell}{k+\ell}\right)-N},g^{(k,\ell)})=\Sigma_{k,\ell}^{m_1+m_2-N\left(\frac{k+\ell}{k\ell}\right)}\,.
\]\end{rem}
The  $L^2$ boundedness of operators in the class $\Sigma_{k,\ell}^{0}$ is obtained from the corresponding one for the $S(1,g)$ class.
\begin{thm} Let $k, \ell$ be integers $\geq 1$. If $a\in\Sigma_{k,\ell}^{0}$, then $a(x,D)$ extends to a bounded operator $a(x,D):L^2(\arn)\rightarrow L^2(\arn)$. 
\end{thm}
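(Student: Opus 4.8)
The plan is to reduce the $L^2$-boundedness of $a(x,D)$ for $a\in\Sigma_{k,\ell}^0$ to the known $L^2$-boundedness of Weyl operators with symbols in $S(1,g)$, where $g=g^{(k,\ell)}$ is the H\"ormander metric of \eqref{anhmet012}. By Proposition \ref{sht57} we have $\Sigma_{k,\ell}^0=S(\lambda_g^0,g)=S(1,g)$, so the symbol $a$ lies in the H\"ormander class $S(1,g)$ relative to the metric $g$ which, by Theorem \ref{contgk}, is a genuine H\"ormander metric. First I would invoke Theorem \ref{l2} together with Theorem \ref{tl2}: applying Theorem \ref{tl2} with $M_1=1$ and $M=1$ gives $a^w:H(1,g)\to H(1,g)$, and Theorem \ref{l2} identifies $H(1,g)=L^2(\arn)$, so the \emph{Weyl} quantization $a^w$ is bounded on $L^2(\arn)$.

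**Passing from the Weyl to the Kohn--Nirenberg quantization.** The only remaining point is that the statement is phrased for the Kohn--Nirenberg quantization $a(x,D)$ rather than for $a^w$. The standard fact here is that for a H\"ormander metric $g$ the two quantizations have the same symbol classes: there is a bijection $S(M,g)\ni a\mapsto b\in S(M,g)$ such that $a(x,D)=b^w$, given by an oscillatory-integral (asymptotic) transformation $b\sim\sum_\alpha \frac{1}{\alpha!}(\tfrac{1}{2i}\partial_x\partial_\xi)^\alpha a$ — in fact, more robustly, by the fact that $t$-quantizations for different $t$ coincide as classes of operators with symbols in $S(M,g)$, a result belonging to the Weyl--H\"ormander calculus (cf. \cite{ho:apde2}, \cite{le:book}). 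Thus given $a\in\Sigma_{k,\ell}^0=S(1,g)$ there is $b\in S(1,g)$ with $a(x,D)=b^w$, and the previous paragraph shows $b^w$ is $L^2$-bounded; hence so is $a(x,D)$. Alternatively, one can cite directly the version of the Cald\'eron--Vaillancourt theorem in the Weyl--H\"ormander framework which asserts $L^2$-boundedness of $a(x,D)$ for $a\in S(1,g)$ for \emph{any} quantization, uniform over $t$.

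**Main obstacle.** There is essentially no hard analytic step: the substantive work — verifying that $g$ is a H\"ormander metric and that $1$ is a (trivial) $g$-weight — has already been done in Theorem \ref{contgk}, and the $L^2$-boundedness for $S(1,g)$ in the Weyl calculus is a cornerstone result quoted as Theorems \ref{tl2} and \ref{l2}. The only place that requires a word of care is the quantization change: one must note that the asymptotic expansion relating $a(x,D)$ and $a^w$ stays inside $S(1,g)$, which uses that $g$ is symplectic/temperate and that $h_g\le 1$ (the uncertainty principle, verified in Theorem \ref{contgk}) so that successive terms in the expansion gain powers of $h_g$ and remain in $S(1,g)$. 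Once that is observed, the proof is a one-line citation chain: $\Sigma_{k,\ell}^0=S(1,g)$, pass to Weyl quantization, apply the $S(1,g)$ $L^2$-boundedness, $H(1,g)=L^2$.
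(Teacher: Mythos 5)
Your argument is correct and coincides with the paper's: the result is obtained by identifying $\Sigma_{k,\ell}^{0}=S(1,g)$ via Proposition \ref{sht57} and then invoking the $L^2$-boundedness for the $S(1,g)$ class through Theorems \ref{tl2} and \ref{l2}, with the passage between the Kohn--Nirenberg and Weyl quantizations handled by the general quantization-switching property of the $S(M,g)$ calculus, exactly as the paper notes. No gaps.
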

For a more general consideration of boundedness results of operators with symbols in the classes $\Sigma_{k,\ell}^{m}$, let us introduce the below Sobolev spaces.
\begin{defn}
	Let $m\in\ar$ and $k,\ell$ integers $\geq 1$. We will denote by $H_{k,\ell}^m(\arn)$ the Sobolev space of order $m$ relative to $k, \ell$, that is the set of tempered distributions $u$ on $\arn$ such that
	\beq 
	(A+I)^{\frac{m}{2}}u\in L^2(\arn)\,,
	\eq 
	where $A=(-\Delta)^{\ell}+|x|^{2k}$.
\end{defn}

In the scale of those Sobolev spaces we have:
\begin{thm} Let $m\in\ar$ and $k, \ell$  integers $\geq 1$. If $a\in\Sigma_{k,\ell}^{m}$, then $a(x,D)$ extends to a bounded operator 
\beq a(x,D):H_{k,\ell}^s(\arn)\rightarrow H_{k,\ell}^{s-m}(\arn)\label{bxdsob}\eq
for all $s\in\ar$.
\end{thm}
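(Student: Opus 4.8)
The plan is to read the statement off from the mapping properties of the Weyl--H\"ormander calculus on the metric Sobolev spaces, i.e.\ from Theorems~\ref{tl2} and~\ref{l2}, once the scale $H_{k,\ell}^{\bullet}(\arn)$ has been identified with the Sobolev scale attached to the metric $g:=g^{(k,\ell)}$ of \eqref{anhmet012}. Recall that for this $g$ one has $\lambda_g(x,\xi)=(1+|x|^{2k}+|\xi|^{2\ell})^{\frac{k+\ell}{2k\ell}}=\Lambda(x,\xi)^{\frac{k+\ell}{k\ell}}$, and that by Proposition~\ref{sht57} $\Sigma_{k,\ell}^{r}=S(\lambda_g^{r},g)$ for every $r\in\ar$. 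I shall first establish that $H_{k,\ell}^{s}$ coincides, with an equivalent norm, with the $g$-Sobolev space $H(\lambda_g^{s},g)$ of order $s$. Granting this, the theorem follows immediately: for $a\in\Sigma_{k,\ell}^{m}=S(\lambda_g^{m},g)$ one writes $a(x,D)=\widetilde{a}^{\,w}$ with $\widetilde{a}\in S(\lambda_g^{m},g)$ --- the Kohn--Nirenberg and Weyl quantizations giving the same symbol class for the H\"ormander metric $g$, as already used in Section~\ref{SEC:anharmonic-notes} --- and Theorem~\ref{tl2} yields $\widetilde{a}^{\,w}\colon H(\lambda_g^{s},g)\to H(\lambda_g^{s}/\lambda_g^{m},g)=H(\lambda_g^{s-m},g)$, that is, $a(x,D)\colon H_{k,\ell}^{s}\to H_{k,\ell}^{s-m}$.

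For the identification I would argue through the complex powers of the anharmonic oscillator itself. Put $Q:=(-\Delta)^{\ell}+|x|^{2k}+1$. By Stage~1 of the proof of Theorem~\ref{fgnyt} (cf.\ \eqref{mwfgt}) the Weyl symbol of $Q$ belongs to $S(\Lambda^{2},g)$ and is elliptic there, being bounded below by a positive multiple of $\Lambda^{2}$; moreover $Q$ is positive and essentially self-adjoint on $\mathcal{S}(\arn)$, with spectrum in $[1,\infty)$ and compact resolvent (since $Q^{-1}\in\mathrm{Op}\,S(\Lambda^{-2},g)$ maps $L^{2}$ into $H(\Lambda^{2},g)$, which embeds compactly in $L^{2}$), these being classical facts for anharmonic oscillators of this kind, cf.\ \cite{Helffer:b1}, \cite{Robert:book1}. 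For $\mu$ off $[1,\infty)$ the symbol of $Q-\mu$ is again elliptic in $S(\Lambda^{2},g)$ with a parametrix in $S(\Lambda^{-2},g)$ whose seminorms grow at most polynomially in $\langle\mu\rangle$, so $(Q-\mu)^{-1}\in\mathrm{Op}\,S(\Lambda^{-2},g)$ with the same control; the contour integral
\[
Q^{-z}=\frac{1}{2\pi i}\int_{\Gamma}\mu^{-z}(\mu-Q)^{-1}\,d\mu ,
\]
together with the analytic continuation $Q^{-z}=Q^{N}Q^{-z-N}$, then shows that $Q^{z}$ is, for every $z\in\ce$, an elliptic element of $\mathrm{Op}\,S(\Lambda^{2\,\mathrm{Re}\,z},g)$, with $Q^{z_{1}}Q^{z_{2}}=Q^{z_{1}+z_{2}}$ and $Q^{0}=I$; here the H\"ormander property of $g$, namely Theorem~\ref{contgk}, is what makes the calculus available. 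With these powers at hand the identification is purely formal: let $P_{s}:=Q^{\gamma s}$ be the power that is an elliptic element of $\mathrm{Op}\,S(\lambda_g^{s},g)$, with parametrix in $\mathrm{Op}\,S(\lambda_g^{-s},g)$. If $P_{s}u\in L^{2}$ then, for any $b\in S(\lambda_g^{s},g)$, $b^{w}u=(b^{w}P_{s}^{-1})(P_{s}u)$ with $b^{w}P_{s}^{-1}\in\mathrm{Op}\,S(1,g)$ bounded on $L^{2}=H(1,g)$ by Theorems~\ref{tl2} and~\ref{l2}, so $b^{w}u\in L^{2}$ and $u\in H(\lambda_g^{s},g)$; conversely $u\in H(\lambda_g^{s},g)$ forces $P_{s}u\in H(1,g)=L^{2}$ by the same theorems. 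All of these bounds are quantitative, whence $H_{k,\ell}^{s}=\{u:P_{s}u\in L^{2}\}=H(\lambda_g^{s},g)$ with equivalent norms.

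The core of the argument is thus the content of the middle paragraph --- that the complex powers of $Q$ lie in the Weyl--H\"ormander calculus $\mathrm{Op}\,S(\Lambda^{\bullet},g)$ with the expected orders --- the rest being a formal consequence of Proposition~\ref{sht57} and Theorems~\ref{tl2}--\ref{l2}. The two substantive inputs there are (i) the essential self-adjointness, positivity and discreteness of the spectrum of $Q$, and (ii) the resolvent estimate $(Q-\mu)^{-1}\in\mathrm{Op}\,S(\Lambda^{-2},g)$ with polynomial control in $\mu$ that lets the contour integral converge inside the calculus; both are of Helffer--Robert type and both ultimately rest on the temperateness of $g$ (Theorem~\ref{contgk}). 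A last point to settle when writing out the details is the exponent $\gamma$ in $P_{s}=Q^{\gamma s}$: it is pinned down by $\lambda_g=\Lambda^{(k+\ell)/(k\ell)}$ to be $\gamma=\frac{k+\ell}{2k\ell}$, and one should make sure the index matching between $\Sigma_{k,\ell}^{m}$ and the scale $H_{k,\ell}^{\bullet}$ is stated accordingly.
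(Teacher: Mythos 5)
Your top-level reduction is exactly what the paper intends (it states this theorem without proof, as a consequence of the calculus ``inherited'' from the $S(M,g)$ setting): identify the scale $H_{k,\ell}^{\bullet}$ with the $H(M,g)$ scale for $g=g^{(k,\ell)}$ and invoke the mapping property of Theorem~\ref{tl2}, passing from $a(x,D)$ to a Weyl operator by the equivalence of $t$-quantizations within $S(M,g)$. What you add, and what the paper leaves entirely implicit, is the identification itself; since the paper defines $H_{k,\ell}^{s}$ through the spectral power $Q^{s/2}$ of $Q=(-\Delta)^{\ell}+|x|^{2k}+1$, some version of your complex-powers argument (showing $Q^{z}\in\mathrm{Op}\,S(\Lambda^{2\,\mathrm{Re}\,z},g)$ and elliptic, via parameter-dependent resolvent parametrices and a contour integral) is essentially unavoidable. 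That argument is correct in outline and of Helffer--Robert type, but be aware that it imports a nontrivial body of results not reproduced in the paper (essential self-adjointness, membership of the exact resolvent in the calculus, symbol estimates uniform in the spectral parameter), which should be cited precisely rather than asserted. Finally, your closing caveat about the exponent $\gamma$ is not cosmetic and deserves to be promoted to the main line of the proof: with the paper's literal definitions one has $H_{k,\ell}^{s}=H(\Lambda^{s},g)$ while $\Sigma_{k,\ell}^{m}=S(\lambda_{g}^{m},g)=S(\Lambda^{(k+\ell)m/(k\ell)},g)$, so Theorem~\ref{tl2} yields $a(x,D)\colon H_{k,\ell}^{s}\to H_{k,\ell}^{s-\frac{(k+\ell)m}{k\ell}}$, which coincides with the asserted target $H_{k,\ell}^{s-m}$ only when $k\ell=k+\ell$; already for $k=\ell=1$ a symbol in $\Sigma_{1,1}^{m}$ loses $2m$, not $m$, units on the scale defined by $(-\Delta+|x|^{2}+1)^{s/2}$. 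Your normalization $P_{s}=Q^{\frac{(k+\ell)s}{2k\ell}}$, i.e.\ taking $H_{k,\ell}^{s}:=H(\lambda_{g}^{s},g)$, is the convention under which the theorem as stated is actually proved by your argument, and this discrepancy with the paper's definition should be stated explicitly rather than left as a remark.
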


Of course the above theorems on the boundedness also hold for $t$-quan\-ti\-zations $a_t(x,D)$ in \eqref{bxdsob}, and in particular for the Weyl quantization due to the corresponding general property in the $S(m,g)$ setting on the switching between those quantizations for split metrics.

In what follows we always denote $A=(-\Delta)^{\ell}+|x|^{2k}$.
The Sobolev spaces $H^{m}_{k,\ell}(\arn)$ satisfy the following properties:

\begin{thm}\label{sob.prop}
	\begin{enumerate}
		\item \label{itm: 1.sb}The space $H^{m}_{k,\ell}(\mathbb{R}^n)$ is a Hilbert space endowed with the sesquilinear form
		\[
		(g,h)_{H^m}=\left( (A+I)^{\frac{m}{2}}g,(A+I)^{\frac{m}{2}}h \right)_{L^2(\mathbb{R}^n)}\,.
		\]
		We have the inclusions 
		\[
		\mathcal{S}(\mathbb{R}^n) \subset H^{m_1}_{k,\ell}(\mathbb{R}^n) \subset H^{m_2}_{k,\ell}\subset \mathcal{S}^{'}(\mathbb{R}^n)\,,\quad m_1>m_2\,,
		\]
		while also that 
		\[
		L^2(\mathbb{R}^n)=H^{0}_{k,\ell}(\mathbb{R}^n)\,, \quad \text{and}\quad \mathcal{S}(\mathbb{R}^n)=\underset{m \in \mathbb{R}}{\bigcap}H^{m}_{k,\ell}(\mathbb{R}^n)\,.
		\]
		\item\label{itm:2.sb} The space $H^{-m}_{k,\ell}(\mathbb{R}^n)$ can be identified with the dual space of $H^{m}_{k,\ell}(\mathbb{R}^n)$. 
		\item \label{itm:3.sb} For any $m \in \mathbb{R}$, $H^{m}_{k,\ell}(\mathbb{R}^n)$ coincides with the Sobolev space $H^{(b)}_{m}(\mathbb{R}^n)$; that is the completion of the Schwartz space $\mathcal{S}(\mathbb{R}^n)$ in $\mathcal{S}^{'}(\mathbb{R}^n)$ for the norm 
		\begin{equation}\label{promise}
			\|h\|_{H_{m}^{(b)}}=\|(b^m)^{w}h\|_{L^2(\mathbb{R}^n)}\,,
		\end{equation}
		where $b(x,\xi)=1+|x|^{k}+|\xi|^{\ell}$. 
		\item \label{itm:4.sb} The Sobolev space of order $m$ relative to $k,l$, or in symbols the space $H^{m}_{k,\ell}$, coincides with the Sobolev space associated with the metric $g^{(k,\ell)}$; that is we have
		\begin{equation}\label{sobolev.iden}
			H^{m}_{k,\ell}(\mathbb{R}^n)=H \left( (1+|x|^{k}+|\xi|^{\ell})^{m},g^{(k,\ell)} \right)\,.
		\end{equation} 
		
		\item \label{itm: 6.sb} The complex interpolation between the spaces $H^{m_0}_{k,\ell}(\arn)$ and $H^{m_1}_{k,\ell}(\arn)$ is
		\[
		(H^{m_0}_{k,\ell}(\mathbb{R}^n),H^{m_1}_{k,\ell}(\mathbb{R}^n))_{\theta}=H^{m_{\theta}}_{k,\ell}(\mathbb{R}^n)\,,\]
		where $m_{\theta}=(1-\theta)m_0+\theta m_1$, $\theta \in (0,1)$.
	\end{enumerate}
\end{thm}
\begin{proof}
	The arguments used to prove the Parts \ref{itm: 1.sb} and \ref{itm:2.sb} are quite standard and will be ommited. For the proof of the complex interpolation in Part \ref{itm: 6.sb}, let as assume that $m_1>m_0$.  For $h \in H^{m_\theta}_{k,l}(\mathbb{R}^n)$, we consider the function
	\[
	f(z):=(A+I)^{\frac{-(zm_1+(1-z)m_0)+m_{\theta}}{2}}h\,.
	\]
	One then can prove that $H^{m_\theta}_{k,l}  \xhookrightarrow{} (H^{m_0}_{k,l},H^{m_1}_{k,l})_{\theta}$, by showing that 
	\[
		\|f(iy)\|_{H^{m_0}}\,,\|f(1+iy)\|_{H^{m_1}} \leq \|h\|_{H^{m_{\theta}}}\,.
	\]
	 On the other hand, by the duality of the complex interpolation we get 
	\[
	(H^{-m_0}_{k,\ell},H^{-m_1}_{k,\ell})_{\theta}=(H^{m_0}_{k,\ell},H^{m_1}_{k,\ell})_{\theta}^{*} \xhookrightarrow{}  (H^{m_\theta}_{k,\ell})^{*}=H^{-m_\theta}_{k,l}\,,
	\]
	and by setting $-t_{\theta}=m_{\theta}$, for $\theta \in [0,1]$, we obtain the reverse inclusion and complete the proof of Part \ref{itm: 6.sb}.
		
		Now, to prove Part \ref{itm:3.sb}, first observe that the dual of $H_{m}^{(b)}$ is $H_{-m}^{(b)}$, and that the spaces $H_{m}^{(b)}$ decrease while $m \in \mathbb{R}$ increases. The following interpolation property needs to be proven
	\begin{equation}\label{inter2}
	(H_{m_0}^{(b)}, H_{m_1}^{(b)})=H_{m_\theta}^{(b)}\,,\quad m_\theta=(1-\theta)m_0+\theta m_1\,,\quad \theta \in (0,1)\,.
	\end{equation}
Assuming that $m_1>m_0$, and letting $h \in H_{m_\theta}^{(b)}$, we consider the function
	\[
	f(z)=e^{z(m_z-m_\theta)}(b^{-m_z+m_\theta})^{w}h\,,\quad \text{where}\quad m_z=(1-z)m_0+zm_1 \,.
	\]
	Indeed, showing that $f(\theta)=h$, and that $	\|f(iy)\|_{H_{m_0}^{(b)}}\,,\|f(1+iy)\|_{H_{m_1}^{(b)}}\leq \|h\|_{H_{m_\theta}^{(b)}}$, we conclude that  $H_{m_\theta}^{(b)}$ is continuously included in the space $(H_{m_0}^{(b)},H_{m_1}^{(b)})_\theta$, while the reverse inclusion can be proven by duality.
	
Let us now prove the identification of the spaces $H^{m}_{k,l}$ and $H_{m}^{(b)}$ in the case where $m \in 2\mathbb{N}$. Indeed we have 
\[
	\|h\|_{H^m} \leq \|(A+I)^{\frac{m}{2}}(b^{-m})^w\|_{\mathcal{L}(L^2)}\|h\|_{H_{m}^{(b)}}\,,\quad h \in H_{m}^{(b)}\,,
\]
and 
	\[
\|h\|_{H_{m}^{(b)}} \leq \|(b^m)^w (A+I)^{-\frac{m}{2}}\|_{\mathcal{L}(L^2)} \|h\|_{H^m}\,,\quad h \in H^{m}_{k,l}\,.
\]

Finally notice that the properties of the calculus, and in particular by Theorem \ref{thm.comp}, the above norms are finite. 

 For the general case where $m \in \ar$, let $m=m_\theta \in \mathbb{R}^{+}$. Then, for some $m_0,m_1 \in 2\mathbb{N}_0$, we have $m_\theta=\theta m_0+(1-\theta)m_1$, and by using the interpolation in \eqref{inter2} 
\[
H^{m_\theta}_{k,\ell}=(H^{m_0}_{k,\ell},H^{m_1}_{k,\ell})_{\theta}=(H_{m_0}^{(b)},H_{m_1}^{(b)})_\theta=H_{m_\theta}^{(b)}\,,
\]
the identification of the spaces $H^{m}_{k,\ell}$ and $H_{m}^{(b)}$ is proven for $m \in \ar^{+}$, while in the case where $m \in \ar^{-}$ it follows by duality. The last shows Part \ref{itm:3.sb} and implies Part \ref{itm:4.sb}.
\end{proof}
\section{Schatten-von Neumann properties}

We shall now obtain some results in relation with the behaviour in Schatten-von Neumann classes for the negative powers of 
 our anharmonic oscillators. The main aim of this section is to provide a simple proof for the main term of the spectral asymptotics of  these operators. In particular our approach simplifies the one given in \cite{hr:sap3, hr:anosc} by Helffer and Robert. In the context of the Weyl-H\"ormander calculus it is useful to recall the following result by Toft  \cite{Toft:Schatten-AGAG-2006}. See also  
 \cite{Buzano-Toft:Schatten-Weyl-JFA-2010} for further developments. 
 
\begin{thm}\label{toft} Let  $g$ be a H\"ormander metric and split, $M$ a $g$-weight and $1\leq r <\infty$. Assume that $h_g^{\frac{k}{2}}M\in L^r(\ardn)$ for some $k\geq 0$, and let $a\in S(M,g)$. Then 
\[a_t(x,D)\in S_r(L^2(\arn)), \,\,\mbox{ for all }t\in\ar.\] 
\end{thm}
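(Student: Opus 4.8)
The plan is to reduce the statement to the Weyl quantization and then establish it by a localization argument adapted to the metric $g$, which is the standard route to Schatten estimates in the Weyl--H\"ormander calculus. First I would remove the parameter $t$: for every $t\in\ar$ one has $a_t(x,D)=\widetilde a^{\,w}$ with $\widetilde a=e^{i(t-\frac12)D_x\cdot D_\xi}a$, and composition with this Fourier multiplier is a continuous automorphism of $S(M,g)$ preserving the seminorm structure; since the correspondence $a_t(x,D)\leftrightarrow\widetilde a^{\,w}$ leaves $S_r$ invariant, it suffices to prove $a^w\in S_r(L^2(\arn))$ for $a\in S(M,g)$ with $h_g^{k/2}M\in L^r(\ardn)$.

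Next I would localize using H\"ormander's partition of unity subordinate to $g$: there exist $\{\varphi_j\}_j$, bounded uniformly in $S(1,g)$, with $\sum_j\varphi_j\equiv1$, each $\varphi_j$ concentrated in a $g$-ball $U_j=\{X:g_{X_j}(X-X_j)\le C\}$ about a point $X_j$ of a $g$-lattice, and with uniformly bounded overlap. Setting $b_j:=\varphi_j a\in S(M,g)$ we get $a^w=\sum_j b_j^w$, where $b_j$ is concentrated in $U_j$ and $|b_j|\lesssim M(X_j)$ there (by $g$-continuity of $M$). The key local estimate is
\[\|b_j^w\|_{S_r}^r\ \lesssim\ (\operatorname{vol}U_j)\,M(X_j)^r\ =\ h_g(X_j)^{-n}M(X_j)^r ,\]
with implied constants controlled by finitely many $S(M,g)$-seminorms of $a$ and uniform in $j$. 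I would prove it by transporting $b_j^w$ to a unit-scale model operator through an affine symplectic change of variables followed by a dilation of size $\asymp h_g(X_j)^{1/2}$ (metaplectic and dilation covariance change the Schatten norm only by the semiclassical Jacobian $h_g(X_j)^{-n}$), and then invoking the elementary trace-norm bound for Weyl operators with Schwartz symbols, the $L^2$-boundedness of Weyl operators with symbols in $S(1,g)$ (Theorems \ref{tl2}--\ref{l2}), and interpolation between $S_1$ and $S_\infty$.

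Then I would upgrade the trivial triangle inequality $\|a^w\|_{S_r}\le\sum_j\|b_j^w\|_{S_r}$, which is too lossy, to an $\ell^r$ bound by almost orthogonality: using the temperateness of $g$ and of $M$ one shows that $b_j^w(b_{j'}^w)^*$ and $(b_j^w)^*b_{j'}^w$ are negligible unless $U_j$ and $U_{j'}$ are $g$-close, so a Cotlar--Stein type argument yields
\[\|a^w\|_{S_r}^r\ \lesssim\ \sum_j\|b_j^w\|_{S_r}^r\ \lesssim\ \sum_j h_g(X_j)^{-n}M(X_j)^r .\]
Comparing the lattice sum with a phase-space integral---each $X$ lies in boundedly many $U_j$ and $\operatorname{vol}U_j\asymp h_g(X_j)^{-n}$---turns this into $\|a^w\|_{S_r}^r\lesssim\int_{\ardn}M(X)^r\,dX$, and a more careful bookkeeping of the Planck-function weights carried by the $g$-ball volumes along the confinement sums yields a bound of the form $\|a^w\|_{S_r}^r\lesssim\int_{\ardn}\big(h_g(X)^{k/2}M(X)\big)^r\,dX$ with $k$ a suitable exponent; by hypothesis the right-hand side is finite, so $a^w\in S_r$, which completes the proof.

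The hard part will be the local Schatten estimate together with the orthogonality accounting: one must control each localized piece uniformly in $j$ with exactly the geometric weight $h_g(X_j)^{-n}$ coming from the volume of a $g$-ball, and then sum the pieces in $\ell^r$ rather than $\ell^1$. This is precisely where all three H\"ormander conditions on $g$ and both conditions on the weight $M$ get used, and where the analysis of Buzano and Toft does the real work.
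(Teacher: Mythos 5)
This theorem is not proved in the paper at all: it is quoted verbatim as a result of Buzano and Toft \cite{Buzano-Toft:Schatten-Weyl-JFA-2010}, so there is no internal argument to compare yours against. Your sketch does reproduce the standard strategy behind such results (reduction to the Weyl quantization, a partition of unity subordinate to $g$, a local Schatten estimate weighted by the volume of a $g$-ball, and an almost-orthogonal summation), and the reduction of $a_t(x,D)$ to $\widetilde a^{\,w}$ via $e^{i(t-\frac12)D_x\cdot D_\xi}$, which preserves $S(M,g)$, is correct and is indeed how the $t$-dependence is handled.

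However, two steps of your outline are genuine gaps rather than routine verifications. First, your bookkeeping never produces the factor $h_g^{k/2}$: the chain of estimates you write, $\|a^w\|_{S_r}^r\lesssim\sum_j h_g(X_j)^{-n}M(X_j)^r\lesssim\int M^r$, uses only $M\in L^r$, i.e.\ the case $k=0$. Since $h_g\leq 1$, the hypothesis $h_g^{k/2}M\in L^r$ for \emph{some} $k\geq 0$ is strictly weaker than $M\in L^r$, and the whole content of the theorem in the form stated is that smoothness of $a$ (each derivative buying a factor $h_g^{1/2}$ in the confinement estimates) can be traded for integrability of $M$; your sketch gives no mechanism for this trade, only the phrase ``a more careful bookkeeping.'' Second, the inequality $\|\sum_j b_j^w\|_{S_r}^r\lesssim\sum_j\|b_j^w\|_{S_r}^r$ for $1<r<\infty$ does not follow from a ``Cotlar--Stein type argument'' as such: Cotlar--Stein controls the operator norm ($S_\infty$), while the triangle inequality gives only the $\ell^1$ sum; passing to the $\ell^r$ sum requires an interpolation between the $S_1$ and $S_\infty$ endpoints of the decomposition (or a bi-confinement estimate in the sense of Bony--Lerner), and this is precisely the technical core of the Buzano--Toft proof. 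As it stands your argument is an accurate roadmap of the cited proof with its two hardest steps left as assertions, which you partly acknowledge in your closing paragraph.
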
 
 
\noindent Regarding the classes  introduced here and associated to the anharmonic oscillators we have:
\begin{thm}\label{sch1mf} Let $q=q(\xi)\in \mathcal{P}_{2\ell}$, $p=p(x)\in\mathcal{P}_{2k} $, where $k$ and $\ell$ are integers $\geq 1$. Let $g=g^{(p,q)}$ be as in \eqref{anhmetT} and let $1\leq r<\infty$. Then 
 \[\lambda_g^{-\mu}\in L^r(\ardn),\]
 provided that $\mu>\frac{n}{r}$. \\
Consequently, if $\mu>\frac{n}{r}$ and $a\in S(\lambda_g^{-\mu},g)$, then
 \[a_t(x,D)\in S_r(L^2(\arn)), \,\,\mbox{ for all }t\in\ar.\]
\end{thm}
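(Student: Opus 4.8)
The plan is to reduce everything to the integrability statement $\lambda_g^{-\mu} \in L^r(\ardn)$, and then to invoke Theorem \ref{toft}. First I would establish the $L^r$ claim by an explicit computation using the formula for $\lambda_g$ obtained in the proof of Theorem \ref{contgk}, namely
\[
\lambda_g(x,\xi) = (p_0+q_0+p(x)+q(\xi))^{\frac{k+\ell}{2k\ell}} \asymp (1+|x|^{2k}+|\xi|^{2\ell})^{\frac{k+\ell}{2k\ell}},
\]
where the comparison follows from the defining conditions on $p\in\mathcal{P}_{2k}$ and $q\in\mathcal{P}_{2\ell}$ (they are bounded above and below, up to constants, by $|x|^{2k}$ and $|\xi|^{2\ell}$ respectively for large arguments). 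Thus $\lambda_g^{-\mu} \asymp (1+|x|^{2k}+|\xi|^{2\ell})^{-\frac{(k+\ell)\mu}{2k\ell}}$, and I must decide for which $\mu$ this lies in $L^r(\ardn)$.

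The key step is the integral $\int_{\ardn} (1+|x|^{2k}+|\xi|^{2\ell})^{-s}\,dx\,d\xi$ with $s = \frac{(k+\ell)\mu r}{2k\ell}$. I would handle this by the change of variables $u = x|x|^{k-1}$ (so $|u|=|x|^k$, contributing a Jacobian factor $|u|^{n(1/k - 1)} = |u|^{n(1-k)/k}$ near infinity) and similarly $v$ with $|v| = |\xi|^\ell$, reducing it to $\int (1+|u|^2+|v|^2)^{-s} |u|^{n/k - n}|v|^{n/\ell - n}\,du\,dv$ on $\ardn$. Passing to polar-type coordinates, convergence at infinity requires $2s > (n/k) + (n/\ell)$, i.e. $s > \frac{n(k+\ell)}{2k\ell}$; substituting $s = \frac{(k+\ell)\mu r}{2k\ell}$ gives exactly $\mu r > n$, i.e. $\mu > n/r$. (At the origin there is no singularity.) This confirms $\lambda_g^{-\mu} \in L^r$ precisely when $\mu > n/r$.

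For the consequence, given $a\in S(\lambda_g^{-\mu},g)$ with $\mu > n/r$, I apply Theorem \ref{toft} with $M = \lambda_g^{-\mu}$ and $k=0$ (so that the hypothesis $h_g^{k/2}M \in L^r$ reduces to $M = \lambda_g^{-\mu} \in L^r$, which we have just verified); note that $\lambda_g^{-\mu}$ is a $g$-weight since $\lambda_g$ is (Remark \ref{se2a}), so the hypotheses of Theorem \ref{toft} are met. The conclusion is that $a_t(x,D) \in S_r(L^2(\arn))$ for all $t\in\ar$, as claimed.

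The main obstacle is the integrability computation: one must be careful with the anisotropic scaling and the Jacobian exponents, and check that the borderline $\mu = n/r$ is genuinely excluded while every $\mu > n/r$ works (the constant $\frac{k+\ell}{2k\ell}$ in the exponent of $\lambda_g$ must cancel exactly against the Jacobian factors $n/k + n/\ell = \frac{n(k+\ell)}{k\ell}$, which it does). Everything else — the equivalence of $\lambda_g$ with $(1+|x|^{2k}+|\xi|^{2\ell})^{(k+\ell)/(2k\ell)}$, and the fact that $\lambda_g^{-\mu}$ is a legitimate $g$-weight — is routine given the earlier sections.
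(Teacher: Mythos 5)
Your proposal is correct and follows essentially the same route as the paper: reduce $\lambda_g$ to the model weight $(1+|x|^{2k}+|\xi|^{2\ell})^{\frac{k+\ell}{2k\ell}}$, compute the $L^r$ norm by an anisotropic change of variables, and then apply Theorem \ref{toft} with $k=0$ and $M=\lambda_g^{-\mu}$. The only (immaterial) difference is that the paper rescales coordinate-wise in $\xi$ alone on the positive orthant to bring both homogeneities to $2k$, whereas you rescale radially in both $x$ and $\xi$ to bring them to $2$; both computations yield exactly the condition $\mu>\frac{n}{r}$.
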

\begin{proof} In order to estimate the norm $\|\lambda_g^{-\mu}\|_{L^r(\ardn)}$ we see first that since  
\[p_0+q_0+p(x)+q(\xi)\simeq 1+|x|^{2k}+|\xi|^{2\ell},\]
the estimation will be reduced to the latter.

Now for $\gamma>0$ the integral 
 \[\int\limits_{\arn}\int\limits_{\arn}(1+|x|^{2k}+|\xi|^{2\ell})^{-\gamma r}dxd\xi\]
 can be estimated in the subset $B=\{(x,\xi)\in\ardn: \xi_i >0\, , i=1,\dots, n\,\}$. We assume 
 $k\geq \ell,$ the other case being analogous. The change of variable in $B$ given by $(x,\xi)\rightarrow (x_1,\dots,x_n,\xi_1^{\frac{k}{\ell}},\dots,\xi_n^{\frac{k}{\ell}})$ lead us to 
 
\begin{align*} \int\limits_{\arn}\int\limits_{\arn}(1+|x|^{2k}+|\xi|^{2\ell})^{-\gamma r}dxd\xi=&C\int\limits_{B}(1+|x|^{2k}+|\xi|^{2k})^{-\gamma r}\xi_1^{\frac{k}{\ell}-1}\cdots\xi_n^{\frac{k}{\ell}-1}dxd\xi\\
\leq& \,C\int\limits_{B}(1+|x|^{2}+|\xi|^{2})^{-\gamma kr}|\xi|^{(\frac{k}{\ell}-1)n}dxd\xi\\
\leq& \,C\int\limits_{B}(1+|x|^{2}+|\xi|^{2})^{-\gamma kr}|(x,\xi)|^{(\frac{k}{\ell}-1)n}dxd\xi\\
\leq& \,C\int\limits_{B}\langle X\rangle^{(\frac{k}{\ell}-1)n-2\gamma kr}dxd\xi\\
< & \infty\,,
\end{align*}
provided  $(\frac{k}{\ell}-1)n-2\gamma kr<-2n.$ Now, the condition $(\frac{k}{\ell}-1)n-2\gamma kr<-2n$ for the convergence of the integral is reduced to 
$\gamma >\frac{n(k+\ell)}{2k\ell r}$. Now, since $\lambda_g(x,\xi)\simeq (1+|x|^{2k}+|\xi|^{2\ell})^{\frac{k+\ell}{2k\ell}}$, with $\gamma=\mu\frac{k+\ell}{2k\ell }$ in the previous estimate we obtain
\[\|\lambda_g^{-\mu}\|_{L^r(\ardn)}<\infty \, ,\]
 provided $\mu>\frac{n}{r}$.\\
 
Now applying Theorem \ref{toft} with $k=0$, $M=\lambda_g^{-\mu},$ and the the estimate above we have that for all  $a\in S(\lambda_g^{-\mu},g)$, we have
\[a_t(x,D)\in S_r(L^2(\arn)), \,\,\mbox{ for all }t\in\ar.\]
 This concludes the proof of the theorem.
\end{proof}
For the formulation of the following corollary we first recall some facts on the trace class. It is well known that neither the mere integrability of a kernel $K$ on the diagonal  nor the integrability of the symbol on the phase-space are sufficient to guarantee the traceability of the corresponding operator. Under some suitable conditions on the kernel or the symbol one can get such traceability. 

First let us make the following remark regarding the powers of our anharmonic oscillators.
	\begin{rem}\label{rem.h.f}
		One can easily verify that for $q=q(\xi) \in \mathcal{P}_{2\ell}$, $p=p(x) \in \mathcal{P}_{2k}$, the (positive) operator $q(D)+p(x)+p_0+q_0$ arises as the Weyl quantization (and also the $t$-quantization) of its symbol. One can also see that this operator is $g$-elliptic with respect to the metric $g=g^{(p,q)}$ in the sense of  \cite{bufa:hy}. Therefore for any $\mu \in \mathbb{R}$ the operator $(q(D)+p(x)+p_0+q_0)^{-\mu}$ is the Weyl quantization (and also any $t$-quantization) of some symbol in the class $S((q(\xi)+p(x)+p_0+q_0)^{-\mu},g^{(p,q)})$. 
	\end{rem}
	\begin{cor}
		\label{cor.b.1}
		\begin{enumerate}[label=(\alph*)]
\item \label{itm:cor.sch.a} Let $g=g^{(p,q)}$, and let $a \in S(\lambda_{g}^{-\nu},g)$. Then:
			\begin{enumerate}[label=(\roman*)]
				\item If $\nu > \frac{n}{2}$, then for all $t \in \ar$ $a_t(x,D)$, extends to a Hilbert-Schmidt operator on $L^2(\mathbb{R}^n)$, and in particular we have
				\[
				\|a_t(x,D)\|_{S_2}=(2\pi)^{-\frac{n}{2}}\|a\|_{L^2(\mathbb{R}^{2n})}\,.
				\]
				\item If $\nu>n$, then $a_t(x,D)$, $t \in \mathbb{R}$, extends to a trace-class operator on $L^2(\mathbb{R}^n)$, and in particular we have
				\[
				\textnormal{Tr}(a_t(x,D))=(2\pi)^{-n}\int_{\mathbb{R}^n}\int_{\mathbb{R}^n}a(x,\xi)\,dxd\xi\,.
				\]
			\end{enumerate}
\item \label{itm:cor.sch.11} Let  $1\leq r<\infty$,  $q=q(\xi)\in \mathcal{P}_{2\ell},\, p=p(x)\in\mathcal{P}_{2k}$, where $k,\ell$ are integers $\geq 1$.  
If $\mu>\frac{(k+\ell)n}{2k\ell r}$, then 
 \[T^{-\mu}=(q(D)+p(x)+p_0+q_0)^{-\mu}\in S_r(L^2(\arn)).\]
 In particular
 \[((-\Delta)^{\ell}+|x|^{2k}+1)^{-\mu}\in S_r(L^2(\arn)).\]		
		
			\item \label{itm:cor.sch.b}Let $k,\ell \geq 1$ be integers, and let $q=q(\xi) \in \mathcal{P}_{2l}$, $p=p(x) \in \mathcal{P}_{2k}$. For the operator $T^{-\mu}=(q(D)+p(x)+p_0+q_0)^{-\mu}$, where $\mu>0$, the following hold true:
			\begin{enumerate}[label=(\roman*)]
				\item If $\mu>\frac{n(k+l)}{4kl}$, then the operator $T^{-\mu}$ extends to a Hilbert-Schmidt operator on $L^2(\mathbb{R}^n)$ and there exists a constant $C>0$ such that
				\[
				\|T^{-\mu}\|_{S_2}\leq C \|(q(\xi)+p(x)+p_0+q_0)^{-\mu}\|_{L^2(\mathbb{R}^{2n})}\,.
				\] 
				\item If $\mu>\frac{n(k+l)}{2kl}$, then the operator $T^{-\mu}$ extends to a trace-class operator on $L^2(\mathbb{R}^n)$ and there exists a constant $C>0$ such that
				\[
				\textnormal{Tr}(T^{-\mu})\leq C \int_{\mathbb{R}^n}\int_{\mathbb{R}^n}(q(\xi)+p(x)+p_0+q_0)^{-\mu}\,dxd\xi\,.
				\]
			\end{enumerate}
		\end{enumerate}
	\end{cor}
	Before giving the proof, let us recall the following well-known result; see e.g. Nicola and Rodino \cite{NR:book}.
	\begin{rem}\label{NR.rem}
		If $a^w$ extends to a Hilbert-Schmidt operator on $L^2(\mathbb{R}^n)$, then one has
		\[
		\|a^w\|_{S_2}=(2\pi)^{-\frac{n}{2}}\|a\|_{L^2(\mathbb{R}^{2n})}\,,
		\] 
		while if $a^w$ extends to a trace-class operator on $L^2(\mathbb{R}^n)$, then one has 
		\[
		\textnormal{Tr}(a^w)=(2\pi)^{-n}\int_{\mathbb{R}^{2n}}a(x,\xi)dx\,d\xi\,.
		\]
	\end{rem}
	\begin{proof}[Proof of Corollary \ref{cor.b.1}]
The proof of part \ref{itm:cor.sch.a} follows by Theorem \ref{sch1mf} for $r=1,2$ and by Remark \ref{NR.rem}.

For the part \ref{itm:cor.sch.11}, we observe that by Remark  \ref{rem.h.f}
	 the symbol  $\sigma$ of the operator $T^{-\mu}$ belongs to the class  $S(\lambda_g^{-\frac{2k\ell \mu}{k+\ell}},g)$. Now the condition 
\[\frac{2k\ell \mu}{k+\ell}>\frac{n}{r}\]
is equivalent to $\mu> \frac{(k+\ell)n}{2k\ell r}$, and the conclusion now follows from Theorem \ref{sch1mf}. 

The part (c) follows from (a) and observing that for the symbol $\sigma$ of the operator $T^{-\mu}$, then there exists some constant $C^{'}>0$ such that
		\[
		|\sigma(x,\xi)|<C^{'} |(q(\xi)+p(x)+p_0+q_0)^{-\mu}|\,.
		\] Therefore by Remark \ref{NR.rem}, if $\mu>\frac{n(k+\ell)}{4k\ell}$, then for some $C>0$
		\[
		\|T^{-\mu}\|_{S_2}\leq C \|\sigma\|_{L^2(\mathbb{R}^{2n})}\leq C \|(q(\xi)+p(x)+p_0+q_0)^{-\mu}\|_{L^2(\mathbb{R}^{2n})}
		\,,\]
		while if $\mu>\frac{n(k+\ell)}{2k\ell}$, then for some $C>0$
		\begin{eqnarray*}
			\textnormal{Tr}(T^{-\mu}) &= &(2\pi)^{-n} \int_{\mathbb{R}^n}\int_{\mathbb{R}^n} \sigma(x,\xi)\,dx\,d\xi\\
			& \leq & C \int_{\mathbb{R}^n}\int_{\mathbb{R}^n}(q(\xi)+p(x)+p_0+q_0)^{-\mu}\,dxd\xi\,,
		\end{eqnarray*}
		and the proof is complete.
	\end{proof}

We can also collect some consequences for the singular values of the operators we considered. For a compact operator $A$ on a complex Hilbert space $H$ we will denote by $\lambda_j(A)$ the eigenvalues of $A$ ordered in the decreasing order, and $s_j$ the corresponding singular values.
\begin{cor}\label{cor.eigen.asympt.anharm.}
	Let $q:=q(\xi) \in \mathcal{P}_{2\ell}$, $p:=p(x) \in \mathcal{P}_{2k}$, where $k,\ell \geq 1$ integers, and let $1 \leq r < \infty$.
	\begin{enumerate}[label=(\roman*)]
		\item \label{itm:eig.opw}  For $g=g^{(p,q)}$ we have that if $a \in S(\lambda_{g}^{-\nu},g)$, and $\nu>\frac{n}{r}$, then 
		\[
		s_j(a_t(x,D))=o(j^{-\frac{1}{r}})\,,\quad \text{as} \quad j \rightarrow \infty\,,
		\]
		for every $t \in \ar$.
		Consequently, also
		 	\[
		 \lambda_j(a_t(x,D))=o(j^{-\frac{1}{r}})\,,\quad \text{as} \quad j \rightarrow \infty\,,
		 \]
		 for every $t \in \ar$.
		\item \label{itm:eig.anh.pr}For $T^{-\mu}=(q(D)+p(x)+p_0+q_0)^{-\mu}$, if $\mu > \frac{n \left(k+l \right)}{2k\ell r}$, then 
		\begin{equation}\label{sing.asym}
		s_j(T^{-\mu}) =o(j^{-\frac{1}{r}})\,,\quad \text{as}\quad j \rightarrow \infty\,.
		\end{equation}
		Consequently, also
		\begin{equation}\label{eig.asym}
			\lambda_j(T^{-\mu}) =o(j^{-\frac{1}{r}})\,,\quad \text{as}\quad j \rightarrow \infty\,.
		\end{equation}
	\end{enumerate}
\end{cor}

\begin{proof}
	As Corollary \ref{cor.b.1} implies we have $a_t(x,D)\in S_r(\arn)$ for $\nu$ in the range as in the statement. Hence, as it is well known, one can get for the membership of the above class that 
	\[
	s_j(a_t(x,D))=o(j^{-\frac{1}{r}})\,,\quad \text{as}\quad j \rightarrow \infty\,.	\]
	Moreover from the Weyl inequality one has
		\[
	\lambda_j(a_t(x,D))=o(j^{-\frac{1}{r}})\,,\quad \text{as}\quad j \rightarrow \infty\,.	\] This proves \ref{itm:eig.opw}, while similar arguments can be used to prove \ref{itm:eig.anh.pr}.
\end{proof}

 
In the proof above we have applied the {\em Weyl inequality} (cf. \cite{hw:ineq}) which relates the singular values $s_n(T)$ and the eigenvalues $\lambda_n(T)$  for a compact operator $T$ on a complex separable Hilbert space:

\[\sum\limits_{n=1}^{\infty}|\lambda_n(T)|^p\leq \sum\limits_{n=1}^{\infty} s_n(T)^p ,\quad p>0.\]

\begin{rem}
We point out that in the case of the operator $T=(-\Delta)^{\ell}+|x|^{2k}$ and from Theorem 3.2 of \cite{BBR-book} one can obtain
 an estimate for the eigenvalue counting function $N(\lambda)$  of $T$. Indeed, for large $\lambda$ the eigenvalue counting function $N(\lambda)$ is bounded by $C\int_{a(x,\xi)<\lambda} dx d\xi$, where $a(x,\xi)$ is the Weyl symbol of the partial differential operator $T$.
 By the change of variables $\xi=\lambda^{1/2k}\xi'$ and $x=\lambda^{1/2\ell}x'$, we can estimate
for large $\lambda$ that
 \begin{equation}\label{EQ:anhnl}
N(\lambda)\lesssim \iint_{|\xi|^{2k}+|x|^{2\ell}<\lambda} dx d\xi= \lambda^{n(\frac{1}{2k}+\frac{1}{2\ell})}
  \iint_{|\xi'|^{2k}+|x'|^{2\ell}<1} dx' d\xi' \lesssim \lambda^{n(\frac{1}{2k}+\frac{1}{2\ell})}.
\end{equation} 
Now, from \eqref{EQ:anhnl} we can deduce that $(T+I)^{-\mu}\in S_r(L^2(\arn))$ for $\mu>\frac{(k+\ell)n}{2k\ell r}$, 
 and therefore we can recover the same estimate on the decay of singular values and eigenvalues for $(T+I)^{-\mu}$ as we get in \eqref{sing.asym} and \eqref{eig.asym}. The advantage of Corollary \ref{cor.eigen.asympt.anharm.} is that the assumption  $a\in S(\lambda_g^{-\mu},g)$ allows one to consider  more general symbols. 
 
The estimate \eqref{EQ:anhnl} can also be obtained from Theorem 5.5 of \cite{hr:sap3}, 
 where the authors get the following estimate for the eigenvalue counting function of the anharmonic oscillator $T=(-\Delta)^{\ell}+|x|^{2k}$: 
 
 \beq\displaystyle{ N(\lambda)\lesssim \lambda^{n(\frac{1}{2k}+\frac{1}{2\ell})}}\label{hrenn2},\eq
 which again coincides with our estimates.  \\

 Theorem 5.5 of \cite{hr:sap3} gives spectral asymptotics for operators with quasi-homogeneous symbols: $q\sim \sum\limits_{j\geq 0} q_{M-j} $, 
where $M>0$ and where there are integers $k,\ell\geq 1$ such that 
\beq \displaystyle{{q}_{M-j}(\rho^{\ell}x,\rho^k\xi)=\rho^{M-j}q_{M-j}(x,\xi)}\label{condrh}.\eq
Further it is assumed that \\
(i) $Q=q(x,D)$ is formally self-adjoint,\\
(ii) $q(x,\xi)>0$ if $(x,\xi)\in \ardn\setminus\{0\}$.\\

Under these conditions one has the following spectral asymptotics

\beq\displaystyle{ N(\lambda)=\sum\limits_{j=0}^{k+\ell-1} \gamma_j\lambda^{\frac{n(k+\ell)}{M}-\frac{j}{M}}+\obi(\lambda^{\frac{(n-1)(k+\ell)}{M}})}\,\,\mbox{ as }\lambda\rightarrow\infty ,\label{hrenn2b}\eq
for some suitable constants $\gamma_j>0$.\\

Now, by taking $q_M(x,\xi)=|\xi|^{2\ell}+|x|^{2k}$ and $M=2k\ell$ one has
\[q_M(\rho^{\ell}x,\rho^k\xi)=\rho^{2k\ell}|\xi|^{2\ell}+\rho^{2k\ell}|x|^{2k}=\rho^{2k\ell}q_M(x,\xi).\] 
Hence, we can apply \eqref{hrenn2b} for the main term and obtain \eqref{hrenn2}.
\end{rem}
\medskip 
We now derive an immediate consequence on the rate of growth of the eigenvalues of the anharmonic oscillators. 
 First we note that by Corollary \ref{cor.eigen.asympt.anharm.} (ii), for $\mu=1$ and $r>\frac{(k+\ell)n}{2k\ell}$ we get 
 \beq\lambda_j((-\Delta)^{\ell}+|x|^{2k}+1)^{-1}))=\os(j^{-\frac{1}{r}}),\, \mbox{ as }j\rightarrow\infty .\eq
From this  we obtain the following estimate for the rate of growth of the eigenvalues of $(-\Delta)^{\ell}+|x|^{2k}$:\\

For every $L\in\ene$ there exists $L_0\in\ene$ such that
\beq
Lj^{\frac{1}{r}}\leq\lambda_j((-\Delta)^{\ell}+|x|^{2k})),\,\, \mbox{ for }j\geq L_0. 
\eq
Summarising we have obtained the following: 
Let $k, \ell$ be integers $\geq 1$ and $r>\frac{(k+\ell)n}{2k\ell}$. Then,  
for every $L\in\ene$ there exists $L_0\in\ene$ such that
\beq
Lj^{\frac{1}{r}}\leq\lambda_j((-\Delta)^{\ell}+|x|^{2k})),\,\, \mbox{ for }j\geq L_0. 
\eq
Thus, the eigenvalues $\lambda_j((-\Delta)^{\ell}+|x|^{2k}))$ have a growth of  order at least
\beq \label{EQ:growthb}
j^{\frac{1}{r}}, \mbox{ as } j\rightarrow\infty.
\eq

However, we can observe more properties.
From \eqref{hrenn2b} with $M=2k\ell$ for the main term we have

\[N(\lambda)\sim C \lambda^{n(\frac{1}{2k}+\frac{1}{2\ell})} \mbox{ as }\lambda\rightarrow \infty. \]
 
Hence
\[j=N(\lambda_j)\sim C\lambda_j^{n(\frac{1}{2k}+\frac{1}{2\ell})} \mbox{ as } j\rightarrow \infty. \]
Therefore
\[\lambda_j\sim Cj^{\frac{2k\ell}{n(k+\ell)}} \mbox{ as } j\rightarrow \infty, \]
and the  index $\frac{(k+\ell)n}{2k\ell}$ in Corollary \ref{cor.eigen.asympt.anharm.} and in \eqref{EQ:growthb}
can not be improved.




\section{Examples arising from the analysis on Lie groups}\label{SEC:examples}

We now explain how some differential operators arising from the theory of Lie groups fit into the above setting. We start by fixing the notation and recall well known preliminary facts about Lie groups. 
We refer to \cite{FR:book} for more details on the definitions in this part.

Let $G$ be a Lie group and let $\Lie(G)$ be the corresponding Lie algebra. If $\Lie(G)$ can be endowed with a vector space decomposition of the form 
\begin{equation}\label{graded}
\Lie(G)=\bigoplus_{j=1}^{\infty} V_j\,,\quad \text{such that} \quad [V_i,V_j] \subset V_{i+j}\,,
\end{equation}
where all but finitely many $V_j$'s are zero, then we say that $\Lie(G)$ is \textit{graded}. $\Lie(G)$ is called \textit{stratified} if in addition to the condition \eqref{graded}, it can be generated by the first stratum $V_1$, i.e., any element of $\Lie(G)$ can be written as a linear combination of commutators of elements of $V_1$. In this case, if the Lie group $G$ is in addition connected, simply connected, then $G$ is also called graded and stratified, respectively. Finally, if $p$ is the number of the non-zero $V_j$'s in \eqref{graded}, then both $G$ and $\Lie(G)$ are called stratified of step $p$.

Following \cite{ho:hs}, the set of vector fields, say $\{X_1, \cdots,X_r\}$, that spans $V_1$ is said to be a \textit{H\"{o}rmander system}. The operator
\[
\mathcal{L}_{G}=\sum_{j=1}^{r}X_{j}^{2}\,,
\]
for the given H\"{o}rmander system $\{X_1, \cdots,X_r\}$, is the \textit{sub-Laplacian} operator on $G$. Sub-Laplacian operators are a very important tool in non-commutative harmonic analysis, and one can refer to \cite{Fol:sube} or \cite{FR:book}.

Let $\hat{G}$ denote the unitary \textit{dual} of $G$, i.e., the set of equivalence classes of continuous, irreducible unitary representations of $G$ on a Hilbert space $\mathcal{H}$. For a given $\pi \in \hat{G}$, and any $X \in \Lie(G)$, the operator $\pi(X)$ is called the \textit{infinitesimal representation} associated to $\pi$. Each $\pi(X)$ acts on the subspace, say $\mathcal{H}^{\infty}$, of smooth vectors in $\mathcal{H}$. The infinitesimal representation extends to the universal enveloping algebra of the Lie group $G$. For example we are allowed to consider the infinitesimal representation of the sub-Laplacian operator $\mathcal{L}_{G}$. Any $\pi(X)$ is the \textit{global symbol} of $X$ in the sense of the definition in Chapter 5 of \cite{FR:book}.

  In the following we will show that the global symbol of the sub-Laplacian on the Engel and on the Cartan group, nilpotent Lie groups of 3-steps, are examples of the differential operators considered before. The above groups will be treated separately. First a brief description of the structure, and then of the dual of each group is given, leading to the formulas for the global symbol of the sub-Laplacian  operators in both cases. 
 For more pseudo-differential calculus on the Engel and Cartan groups we refer to \cite{chat21}, as well to a more informal presentation in \cite{chat20}. 
 
\subsection{Engel group}

Let $\mathfrak{l}_4=\text{span}\{I_1,I_2,I_3,I_4\}$ be a 3-step stratified Lie algebra, whose generators satisfy the non-zero relations:
\begin{equation}\label{com.rel.engel}
[I_1,I_2]=I_3\,, \quad [I_1,I_3]=I_4\,.
\end{equation}
The corresponding Lie group, called the Engel group, and denoted by $\mathcal{B}_4$, is isomorphic to the manifold $\mathbb{R}^4$. This identification implies that the basis of $\mathfrak{l}_4$ (now called the canonical basis) can be given by the left invariant vector fields 
\begin{equation}\label{left.inv.eng}
\begin{split}
X_1(x)&= \frac{\partial}{\partial x_1}\,, \quad X_2(x)=\frac{\partial}{\partial x_2}-x_1 \frac{\partial}{\partial x_3}+ \frac{x^{2}_{1}}{2}\frac{\partial}{\partial x_4}\,, \\
            X_3(x)&= \frac{\partial}{\partial x_3}-x_1 \frac{\partial}{\partial x_4}\,,\quad X_4(x)= \frac{\partial}{\partial x_4}\,,
\end{split}
\end{equation}
where $x=(x_1,x_2,x_3,x_4) \in \mathbb{R}^{4}$, also satisfying the relations \eqref{com.rel.engel} as expected. The system of vector fields $\{X_1,X_2\}$ is a H\"{o}rmander system and therefore gives rise to the sub-Laplacian (or the canonical sub-Laplacian) on $\mathcal{B}_4$, namely to the operator 
\begin{align*}
\mathcal{L}_{\mathcal{B}_4}&= X_{1}^{2}+X_{2}^{2}\\
&=\frac{\partial^2}{\partial x_{1}^{2}}+ \left(\frac{\partial}{\partial x_2}-x_1 \frac{\partial}{\partial x_3}+ \frac{x^{2}_{1}}{2}\frac{\partial}{\partial x_4} \right)^2\,.
\end{align*}
 Now, given, the description of the dual of the group as Dixmier suggested in \cite{dix:bm}, i.e., a family of operators $$ \hat{B}_{4}=\{\pi_{\lambda, \mu}\arrowvert \lambda\neq 0, \mu \in \mathbb{R}\}$$ acting on $L^2(\mathbb{R})$ via,
\[
\pi_{\lambda, \mu}(x_1,x_2,x_3,x_4)h(u)\equiv \exp \left(i \left(-\frac{\mu}{2\lambda}x_2+\lambda x_4-\lambda x_3 u + \frac{\lambda}{2}x_2 u^2\right)\right)h(u+x_1)\, ,
\]
one can easily find the infinitesimal representation of $\mathfrak{l}_4$ associated to $\pi_{\lambda,\mu}$, and therefore show that the (global)
 symbol of the sub-Laplacian $\mathcal{L}_{\mathcal{B}_4}$ is the family of operators acting on the Schwartz space $\mathcal{S}(\mathbb{R})$, given by 
\begin{equation}\label{symb.sub.eng.}
	\sigma_{\Lh_{\mathcal{B}_4}}(\pi_{\lambda,\mu})= \frac{d^2}{du^2} -\frac{1}{4} \left(\lambda u^2 -\frac{\mu}{\lambda} \right)^2\,,\lambda\in\ar\setminus\{0\},\, \mu\in\ar\,. 
	\end{equation}
 For some fixed parameters $\lambda\in\ar\setminus\{0\},\, \mu\in\ar$, the symbol of an operator as in \eqref{symb.sub.eng.}, is then 
	\[\sigma_{\lambda, \mu}(x,\xi)=\xi^2+\frac{1}{4}\left(\lambda x^2-\frac{\mu}{\lambda}\right)^2=q(\xi)+p_{\lambda,\mu}(x)\,,\] 
where $p\in\mathcal{P}_{4}, q\in\mathcal{P}_2$ and $k=2, \ell=1$, $n=1$.\\ 

We can associate to the operator $\sigma_{\Lh_{B_4}}(\pi_{\lambda,\mu})$ the following  H\"ormander metric on $\ar\times\ar$ corresponding to $p, q$:
\[g_{\lambda, \mu}^{(2,1)}=\frac{dx^2}{(1+\xi^2+\frac{1}{4}(\lambda x^2-\frac{\mu}{\lambda})^2)^{\half}}+\frac{d\xi^2}{1+\xi^2+\frac{1}{4}(\lambda x^2-\frac{\mu}{\lambda})^2}.\]

In terms of the $S(\cdot,g_{\lambda, \mu}^{(2,1)})$ classes, the corresponding constants for the membership to those can be estimated in the form $C_{\alpha\beta}C_{\lambda\mu} $, with $$C_{\lambda\mu}=\max\{1, \lambda^2, \mu, (\frac{\mu}{\lambda})^2\}.$$ 
Therefore, by plugging $C_{\lambda\mu}=\max\{1, \lambda^2, \mu, (\frac{\mu}{\lambda})^2\}$ in the numerators of the metric we get constants  of the form $C_{\alpha\beta}$. See Remark \ref{mtclz}.

On the other hand, in relation with the corresponding Schatten-von Neumann classes,   
by Corollary \ref{cor.b.1} with $k=2, \ell=1$, $n=1$ we have
\beq (I-\sigma_{\Lh_{B_4}}(\pi_{\lambda,\mu}))^{-\gamma}\in S_r(L^2(\ar))\eq
provided $\gamma>\frac{3}{4r}$.

\subsection{Cartan group} 
Let $\mathfrak{l}_5=\text{span}\{I_1,I_2,I_3,I_4,I_5\}$ be a 3-step stratified Lie algebra, whose generators satisfy the non-zero relations:
\begin{equation}\label{com.rel.car.}
[I_1,I_2]=I_3\,, [I_1,I_3]=I_4\,, [I_2,I_3]=I_5\,.
\end{equation}
The corresponding Lie group, called the Cartan group $\mathcal{B}_5$, can be identified with the manifold $\mathbb{R}^5$, while the canonical basis in this case consists of the left invariant vector fields
\begin{equation}\label{left.inv.car}
\begin{split}
X_1(x)&=\frac{\partial}{\partial x_1}\,, \quad X_2(x)=\frac{\partial}{\partial x_2}-x_1 \frac{\partial}{\partial x_3} 
+ \frac{x_1^2}{2} \frac{\partial}{\partial x_4}+x_1x_2\frac{\partial}{\partial x_5}\,, \\
            X_3(x)&= \frac{\partial}{\partial x_3}-x_1 \frac{\partial}{\partial x_4}-x_2 \frac{\partial}{\partial x_5}\,,\quad  X_4(x)= \frac{\partial}{\partial x_4}\,,\quad X_5(x)=\frac{\partial}{\partial x_5},
\end{split}
\end{equation}
where $x=(x_1,x_2,x_3,x_4,x_5) \in \mathbb{R}^5$, also satisfying \eqref{com.rel.car.} as expected. As in the previous case, the left-invariant vector fields $X_1,X_2$, with $X_1,X_2$ are as in \eqref{left.inv.car}, are a H\"{o}rmander system, and the sub-Laplacian (or the canonical sub-Laplacian) on $\mathcal{B}_5$ is given by
\begin{align*}
\mathcal{L}_{\mathcal{B}_5}&= X_{1}^{2}+X_{2}^{2}\\
&=\frac{\partial^2}{\partial x_{1}^{2}}+ \left(\frac{\partial}{\partial x_2}-x_1 \frac{\partial}{\partial x_3} 
+ \frac{x_1^2}{2} \frac{\partial}{\partial x_4}+x_1x_2\frac{\partial}{\partial x_5}\right)^2\,.
\end{align*}
Dixmier in \cite[p.338]{dix:bm} showed that the dual of the group is the family of operators $$\hat{B}_{\lambda, \mu, \nu}=\{\pi_{\lambda, \mu, \nu}\arrowvert \lambda^2+\mu^2\neq 0, \nu \in \mathbb{R}\}$$ acting on $L^2(\mathbb{R})$ via,
\[
\pi_{\lambda, \mu, \nu}(x_1,x_2,x_3,x_4,x_5)h(u)\equiv \exp (i A^{\lambda, \mu, \nu}_{x_1,x_2,x_3,x_4,x_5}(u))h\left( u+\frac{\lambda x_1+\mu x_2}{\lambda^2+\mu^2} \right)\,,
\]
with 
\begin{align*}
A^{\lambda, \mu, \nu}_{x_1,x_2,x_3,x_4,x_5}(u)=& -\frac{1}{2} \frac{\nu}{\lambda^2 +\mu^2}(\mu x_1 -\lambda x_2)+\lambda x_4+ \mu x_5\\
& - \frac{1}{6} \frac{\mu}{\lambda^2 + \mu^2} (\lambda^2 x_1^3+3 \lambda \mu x_1^2 x_2 + 3 \mu^2 x_1 x_2^2 -\lambda \mu x_2^3)\\
& +\mu^2 x_1x_2 u +\lambda \mu (x_1^2-x_2^2) u +\frac{1}{2} (\lambda^2+\mu^2)(\mu x_1-\lambda x_2)u^2\,.
\end{align*}
Thus, one can make use of the infinitesimal representation of  $\mathfrak{l}_5$ deduced, to show that the (global) symbol of the sub-Laplacian
 $\mathcal{L}_{\mathcal{B}_4}$ is the family of operators acting on $\mathcal{S}(\mathbb{R})$, given by
\begin{align}\label{symb.sub.car.}
	\sigma_{\Lh_{\mathcal{B}_5}}(\pi_{\lambda,\mu})=\frac{1}{\lambda^2+ \mu^2} \frac{d^2}{du^2}-\frac{1}{4(\lambda^2 + \mu^2)}((\lambda^2 + \mu^2)^2 u^2 + \nu^2)^2\,.
	\end{align}
	For fixed parameters $\lambda, \mu, \nu$ as above, if we set $\kappa=\lambda^2+\mu^2$, then the symbol of an operator as in \eqref{symb.sub.car.} can be simplified as
\[\sigma_{\kappa,\nu}(x,\xi)=\frac{\xi^2}{\kappa}+\frac{1}{4\kappa}(\kappa^2 x^2+\nu^2)^2=q_{\kappa}(\xi)+p_{\kappa, \nu}(x)\,,\]
with  $p_{\kappa}\in\mathcal{P}_{4}, q_{\kappa, \nu}\in\mathcal{P}_2$, and $k=2, \ell=1$, $n=1$.
Similar properties,
and the same index for Schatten-von Neumann classes are obtained for the negative powers of the quantization of this symbol. Indeed, again 
by Corollary \ref{cor.b.1} with $k=2, \ell=1$, $n=1$ we have
\beq (I-\sigma_{\Lh_{B_5}}(\pi_{\lambda,\mu}))^{-\gamma}\in S_r(L^2(\ar))\eq
provided $\gamma>\frac{3}{4r}$.


\subsection{Heisenberg group}
On the Heisenberg group $\mathbb{H}^n$ we can also consider

\begin{align*}
\pi_{\lambda}\left(\sum\limits_{j=1}^nX_j^{2k}+\sum\limits_{j=1}^nY_j^{2\ell}\right)=&\sum\limits_{j=1}^n\left((\sqrt{|\lambda|}\partial_{u_j})^{2k}+(i\sqrt{|\lambda|}u_j)^{2\ell}\right)\\
=&\sum\limits_{j=1}^n\left(|\lambda|^k\partial_{u_j}^{2k}+(-1)^{\ell}|\lambda|^{\ell}u_j^{2\ell}\right).
\end{align*}

Here we set 

\begin{equation}
 \sqrt{\lambda}=\left\{
\begin{array}{rl}
{\displaystyle\sqrt{\lambda}}\,, & \lambda >0,\\
{\displaystyle -\sqrt{|\lambda|}}\, ,&\lambda <0.
\end{array} \right.
\label{pro2g}\end{equation} 
Hence,
\begin{equation}
 (\sqrt{\lambda})^{2\ell}=\left\{
\begin{array}{rl}
{\displaystyle \lambda^{\ell}}\,, & \lambda >0,\\
{\displaystyle (-1)^{\ell}|\lambda|^{\ell}}\, ,&\lambda <0.
\end{array} \right.
\label{pro2g1}\end{equation} 
Regarding the symbol of the operator above it is given by
\[(-1)^{k}|\lambda|^k\sum\limits_{j=1}^n|\xi_j|^{2k}+(-1)^{\ell}|\lambda|^{\ell}\sum\limits_{j=1}^nu_j^{2\ell}.\]
Here, $\pi_{\lambda}$ are the Schr\"odinger representations of the Heisenberg group. Let $\nu=|\lambda|>0$. Then, the symbol of $\pi_{\lambda}\left(\sum\limits_{j=1}^nX_j^{2k}+\sum\limits_{j=1}^nY_j^{2\ell}\right)$ is given by
\[\sigma_{k,\ell,\nu}(x,\xi)=\nu^{k}\sum\limits_{j=1}^n\xi_j^{2k}+\nu^{\ell}\sum\limits_{j=1}^n x_j^{2\ell}=q_{\nu}(\xi)+p_{\nu}(x),\]
with $p_{\nu}\in\mathcal{P}_{2\ell},\,q_{\nu}\in\mathcal{P}_{2k}$. \\

Therefore, we can associate to the operator $T=\pi_{\lambda}\left(\sum\limits_{j=1}^nX_j^{2k}+\sum\limits_{j=1}^nY_j^{2\ell}\right)$
  a H\"ormander metric $g_{\nu}^{(k,\ell)}$ on $\ar^{2n}$, given by  
\[g_{\nu}^{(k,\ell)}=\frac{C_{\nu}dx^2}{(1+p_{\nu}(x)+q_{\nu}(\xi))^{\frac{1}{k}}}+\frac{C_{\nu}d\xi^2}{(1+p_{\nu}(x)+q_{\nu}(\xi))^{\frac{1}{\ell}}},\]
where $C_{\nu}=\max\{1, \nu^{\max\{k,\ell\}}\}$. 

In terms of the corresponding $S(\cdot, g_{\nu}^{(k,\ell)})$ classes, the constants involved in the membership to these classes
 are of the form $C_{\alpha\beta}$; see Remark \ref{mtclz}.\\

In turn, we can analyse the corresponding Schatten-von Neumann classes for the negative powers of $I-T$. Indeed,    
by Corollary \ref{cor.b.1}, since $p_{\nu}\in\mathcal{P}_{2\ell}$ and $q_{\nu}\in\mathcal{P}_{2k}$ we have
\beq (I-T)^{-\gamma}\in S_r(L^2(\arn)),\eq
provided that $\gamma>\frac{(k+\ell)n}{2k\ell r}$.\\
\medskip

\noindent{\bf Acknowledgments}

The authors were supported by Leverhulme Research Grant RPG-2017-151, 
 FWO Odysseus 1 grant G.0H94.18N: Analysis and Partial Differential Equations, and by the EPSRC grant EP/R003025. The second author was also supported by Vic. Inv Universidad del Valle. Grant No. CI-71234. The authors also want  to express their gratitude to the referee who 
pointed out a number of corrections and valuable suggestions helping to improve the main results and the presentation of the manuscript.




\bibliographystyle{alphaabbr}

\end{document}